\newtheorem{theorem}{Theorem}[section]
\newtheorem{proposition}[theorem]{Proposition}
\newtheorem{lemma}[theorem]{Lemma}
\newtheorem{question}[theorem]{Question}
\theoremstyle{definition}
\newtheorem{definition}[theorem]{Definition}
\newtheorem{definition-proposition}[theorem]{Definition-Proposition}
\newtheorem{remark}[theorem]{Remark}
\newcommand{\Hom}{\operatorname{Hom}\nolimits}
\renewcommand{\mod}{\mathsf{mod}\hspace{.01in}}
\newcommand{\dimvec}{\underline{\operatorname{dim}}\hspace{.01in}}
\newcommand{\End}{\operatorname{End}\nolimits}
\newcommand{\Ext}{\operatorname{Ext}\nolimits}
\newcommand{\op}{\operatorname{op}\nolimits}
\newcommand{\rad}{\operatorname{rad}\nolimits}
\newcommand{\soc}{\operatorname{soc}\nolimits}
\newcommand{\Ker}{\operatorname{Ker}\nolimits}
\renewcommand{\AA}{{\mathcal A}}
\newcommand{\TT}{{\mathcal T}}
\renewcommand{\SS}{{\mathcal S}}
\newcommand{\FF}{{\mathcal F}}
\newcommand{\CC}{{\mathbf C}}
\newcommand{\PP}{{\mathbf P}}
\newcommand{\QQ}{{\mathbf Q}}
\newcommand{\RR}{{\mathbf R}}
\newcommand{\add}{\mathsf{add}\hspace{.01in}}
\newcommand{\Fac}{\mathsf{Fac}\hspace{.01in}}
\newcommand{\Sub}{\mathsf{Sub}\hspace{.01in}}
\newcommand{\Filt}{\mathsf{Filt}\hspace{.01in}}
\newcommand{\tors}{\mbox{\rm tors}\hspace{.01in}}
\newcommand{\ftors}{\mbox{\rm f-tors}\hspace{.01in}}
\newcommand{\torf}{\mbox{\rm torf}\hspace{.01in}}
\newcommand{\ftorf}{\mbox{\rm f-torf}\hspace{.01in}}
\newcommand{\sttilt}{\mbox{\rm s$\tau$-tilt}\hspace{.01in}}
\newcommand{\commentout}[1]{}
\begin{document}
\title{Lattice structure of torsion classes for path algebras}
\author{Osamu Iyama, Idun Reiten, Hugh Thomas, Gordana Todorov}
\address{Graduate School of Mathematics, Nagoya University, Chikusa-ku, Nagoya. 464-8602, Japan}
\email{iyama@math.nagoya-u.ac.jp}
\urladdr{http://www.math.nagoya-u.ac.jp/~iyama/}
\address{Department of Mathematical Sciences, NTNU, Norway}
\email{idunr@math.ntnu.no}
\address{Department of Mathematics and Statistics, University of New Brunswick, Fredericton, NB, Canada, E3B 5A3}
\email{hugh@math.unb.ca}
\address{Department of Mathematics, Northeastern University, 360 Huntington Avenue, Boston, MA 02115}
\email{g.todorov@neu.edu}
\urladdr{http://www.math.neu.edu/}
\thanks{All of the authors were partially supported by MSRI.
The first author was supported by JSPS Grant-in-Aid for Scientific Research 24340004, 23540045 and 22224001.
The second author was supported by FRINAT grants 19660 and 231000 from the Research Council of Norway.
The third author was partially supported by an NSERC Discovery Grant.
The fourth author was partially supported by NSF grant DMS-1103813.}
\thanks{2010 {\em Mathematics Subject Classification.} 16G20, 18E40, 05E10.}
\thanks{{\em Key words and phrases.} Torsion class, lattice, $\tau$-tilting module, tilting module}

\begin{abstract} We consider module categories of path algebras of connected acyclic quivers. It is shown in this paper that the set of functorially finite torsion classes form a lattice if and only if the quiver is either Dynkin quiver of type A, D, E, or the quiver has exactly two vertices. 
\end{abstract}
\maketitle
\setcounter{section}{-1}

\section {Introduction}


Let $\Lambda$ be a finite dimensional algebra over an algebraically closed field $k$, and $\mod \Lambda$ the category of finite dimensional $\Lambda$-modules.
In this setup a subcategory $\TT$ is a \emph{torsion class} if it is closed under factor modules, isomorphisms and extensions.
The set $\tors \Lambda$ of torsion classes is a partially ordered set by inclusion,
and it is easy to see that it is always a lattice (see Definition \ref{definition of lattice}).
There is however an important subset $\ftors \Lambda$ of $\tors\Lambda$,
where $\ftors \Lambda$ denotes the set of torsion classes which are functorially finite in $\mod \Lambda$.
In this setting a torsion class is \emph{functorially finite} precisely when it is of the form $\Fac X$ for some $X$ in $\mod\Lambda$ \cite{AS}.
The set  $\ftors \Lambda$ is of special interest since the elements are in bijection with the support $\tau$-tilting modules (see Definition \ref{define tau-rigid}),
which were introduced in \cite{AIR}.
This bijection also induces a structure of partially ordered set on the support $\tau$-tilting modules.
A related partial order has been studied in classical tilting theory by many authors (e.g. \cite{RS,HU,AI,K}).
There is also a connection with the weak order on finite Coxeter groups \cite{M}. 

The aim of this paper is to study the following questions.

\begin{question}\label{motivating question}
Let $\Lambda$ be a finite dimensional $k$-algebra.
\begin{itemize}
\item[(a)] When is $\ftors\Lambda$ a complete lattice?
\item[(b)] When is $\ftors\Lambda$ a lattice?
\end{itemize}
\end{question}

A simple answer to Question \ref{motivating question}(a) is given in terms of the $\tau$-rigid finiteness (see Definition \ref{tau-rigid finite} for details):

\begin{theorem}\label{complete lattice}
Let $\Lambda$ be a finite dimensional $k$-algebra. Then the following conditions are equivalent.
\begin{itemize}
\item[(a)] $\ftors\Lambda$ forms a complete lattice.
\item[(b)] $\ftors\Lambda$ forms a complete join-semilattice.
\item[(c)] $\ftors\Lambda$ forms a complete meet-semilattice.
\item[(d)] $\ftors\Lambda=\tors\Lambda$ holds (i.e. any torsion class in $\mod\Lambda$ is functorially finite).
\item[(e)] $\Lambda$ is $\tau$-rigid finite.
\end{itemize}
\end{theorem}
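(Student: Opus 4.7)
The plan is to prove the cycle $(d)\Rightarrow(a)\Rightarrow(b)$, $(a)\Rightarrow(c)$, along with $(b)\Rightarrow(d)$ and $(c)\Rightarrow(d)$, and then invoke $(d)\Leftrightarrow(e)$ from the literature. The implications $(d)\Rightarrow(a)$ and $(a)\Rightarrow(b),(c)$ are immediate: $\tors\Lambda$ is always a complete lattice (meets are intersections, joins are the torsion closures of unions), and a complete lattice is both a complete join- and complete meet-semilattice. For $(d)\Leftrightarrow(e)$ I would invoke the theorem of Demonet--Iyama--Jasso that $\Lambda$ is $\tau$-rigid finite if and only if $\tors\Lambda=\ftors\Lambda$.

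The heart of the argument is $(b)\Rightarrow(d)$ and $(c)\Rightarrow(d)$. I would first establish a \emph{separation property}: for every $\TT\in\tors\Lambda$, $\TT=\bigcap\{\TT'\in\ftors\Lambda:\TT'\supseteq\TT\}$. For the non-trivial inclusion, given $N\notin\TT$ pick a nonzero map from $N$ into some $F\in\TT^{\perp}$; then the torsion-free class $\SS$ generated by $F$ is contained in $\TT^{\perp}$ and is functorially finite by the standard Auslander--Smal\o\ fact that singly generated torsion-free classes are functorially finite, so $\TT':={}^{\perp}\SS$ is a functorially finite torsion class containing $\TT$ but excluding $N$. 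Dually, for any $M\in\mod\Lambda$ the torsion class $\TT_M$ generated by $M$ is functorially finite.

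Granted these, $(c)\Rightarrow(d)$ runs as follows. Given $\TT\in\tors\Lambda$, let $\mathcal{M}$ denote the meet in $\ftors\Lambda$ of the family $\{\TT'\in\ftors\Lambda:\TT'\supseteq\TT\}$, which exists by (c). The separation identity gives $\mathcal{M}\subseteq\bigcap\TT'=\TT$. Conversely, for each $M\in\TT$ the functorially finite torsion class $\TT_M$ is contained in $\TT$, hence in $\mathcal{M}$ by maximality, so $M\in\TT_M\subseteq\mathcal{M}$; thus $\TT=\mathcal{M}\in\ftors\Lambda$. The dual argument, taking the join in $\ftors\Lambda$ of $\{\TT_M:M\in\TT\}$ and noting by minimality together with the separation identity that it is contained in every $\TT'\in\ftors\Lambda$ with $\TT'\supseteq\TT$ and therefore in $\TT$ itself, yields $(b)\Rightarrow(d)$.

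The main obstacle is the Auslander--Smal\o\ fact that singly generated torsion and torsion-free classes are functorially finite; once this is in hand, the separation property and the lattice-theoretic arguments are routine. The Demonet--Iyama--Jasso equivalence used in $(d)\Leftrightarrow(e)$ is deep but is applied as a black box.
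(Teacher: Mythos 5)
Your skeleton (reduce everything to ``complete semilattice $\Rightarrow$ every torsion class is functorially finite'', quote Demonet--Iyama--Jasso for (d)$\Leftrightarrow$(e), and dualize for the join version) is sound, but both of the lemmas on which your core argument rests are false. First, it is \emph{not} true that the torsion class generated by a single module is functorially finite. The Auslander--Smal\o\ fact is that $\Fac X$ and $\Sub X$ are functorially finite \emph{subcategories}, and that a torsion class is functorially finite iff it equals $\Fac X$ for some $X$; but the smallest torsion class containing $X$ is $\Filt(\Fac X)$, which is strictly larger than $\Fac X$ in general and need not be functorially finite. Indeed, this paper's Proposition \ref{wild}(a) exhibits exactly such a failure: for $M,N$ as in Proposition \ref{R}, the category $\Fac\Filt(M,N)$ is the smallest torsion class containing the single module $M\oplus N$, and it is not functorially finite. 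Second, your ``separation property'' $\TT=\bigcap\{\TT'\in\ftors\Lambda:\TT'\supseteq\TT\}$ also fails. For the Kronecker algebra, take $\TT=\add(\CC\cup\{\text{preinjectives}\})$ for a single tube $\CC$: the only functorially finite torsion classes containing a tube are $\mod kQ$ and the classes $\Fac A_i$ generated by preprojective tilting modules, and their intersection contains \emph{all} regular modules, hence strictly contains $\TT$. Your proof of the separation property breaks at the same point as before: the torsionfree class generated by $F$ is not $\Sub F$ but its closure under extensions and submodules, and that closure need not be functorially finite. Since both directions of your argument for (b)$\Rightarrow$(d) and (c)$\Rightarrow$(d) depend on these claims, the heart of the proof does not survive.

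The paper's actual route to (c)$\Rightarrow$(e) avoids singly generated torsion classes entirely and argues by mutation: if $\Lambda$ is not $\tau$-rigid finite, there is an infinite strictly descending chain $\Lambda=M_0>M_1>\cdots$ of support $\tau$-tilting modules (Proposition \ref{infinite chain}(b)); if the corresponding torsion classes had a meet $M$ in $\ftors\Lambda$, then by Proposition \ref{infinite chain}(a) each $M_i$ lies above one of the finitely many mutations $N_1,\dots,N_n$ of $M$ with $N_k>M$, and a pigeonhole/nested-intersection argument produces a single $N_k$ with $M_i\ge N_k>M$ for all $i$, contradicting that $M$ is the meet. If you want to salvage your approach, you would need to replace ``torsion class generated by $M$'' with something that is provably functorially finite --- which is essentially what the mutation machinery of \cite{AIR} and \cite{DIJ} supplies.
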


On the other hand, Question \ref{motivating question}(b) for an arbitrary algebra $\Lambda$ above does not seem to have a simple answer. Hence
we are mainly concerned with $\ftors (kQ)$ where $kQ$ is the path algebra of a finite connected acyclic quiver $Q$. Our main theorem is the following.

\begin{theorem}\label{main}
Let $Q$ be a finite connected quiver with no oriented cycles. Then the following conditions are equivalent.
\begin{itemize}
\item[(a)] $\ftors (kQ)$ forms a lattice.
\item[(b)] $\ftors (kQ)$ forms a join-semilattice (see Definition \ref{definition of lattice}).
\item[(c)] $\ftors (kQ)$ forms a meet-semilattice.
\item[(d)] $Q$ is either a Dynkin quiver or has at most 2 vertices.
\end{itemize}
\end{theorem}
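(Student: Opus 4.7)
The implications (a) $\Rightarrow$ (b) and (a) $\Rightarrow$ (c) are tautological. For the converse, I would prove (b) $\Rightarrow$ (d) directly and derive (c) $\Rightarrow$ (d) by duality: the standard order-reversing bijection $\TT \mapsto \TT^\perp$ from $\ftors kQ$ to the functorially finite torsion-free classes of $kQ$, composed with the $k$-linear duality $D = \Hom_k(-, k)$, yields an anti-isomorphism of posets $\ftors kQ \cong \ftors k Q^{\op}$. Since condition (d) depends only on the underlying graph of $Q$, if (c) holds for $Q$ then (b) holds for $Q^{\op}$, whence (d) for $Q^{\op}$ and hence (d) for $Q$.

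\textbf{(d) $\Rightarrow$ (a).} If $Q$ is Dynkin, $kQ$ is representation finite, hence $\tau$-rigid finite, and Theorem~\ref{complete lattice} gives $\ftors kQ = \tors kQ$, which is always a complete lattice. If $Q$ has at most $2$ vertices and is not Dynkin, then $Q$ is the $n$-Kronecker quiver $K_n$ with $n \ge 2$. Here I would enumerate the support $\tau$-tilting modules over $kK_n$ explicitly, using that the indecomposable $\tau$-rigid modules are exactly the preprojective and preinjective indecomposables. A direct check of the resulting poset — whose Hasse diagram is two infinite chains of mutations on the preprojective and preinjective sides, joined at the global maximum $\mod kK_n$ and the global minimum $0$ — shows that all binary meets and joins exist.

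\textbf{(b) $\Rightarrow$ (d).} This is the main technical direction, which I would prove contrapositively: assuming $Q$ connected, acyclic, non-Dynkin with at least $3$ vertices, I aim to exhibit ff torsion classes $\TT_1, \TT_2$ with no least upper bound in $\ftors kQ$. My plan is to reduce to a short list of minimal test cases via full-subquiver idempotent arguments: for an idempotent $e \in kQ$ corresponding to deleting a vertex $v$, there is a natural order-preserving comparison between $\ftors kQ$ and $\ftors k(Q\setminus v)$ (via restriction and prolongation of torsion classes) under which failure of the join-semilattice property transfers. Every non-Dynkin connected acyclic quiver with $\ge 3$ vertices contains, as a full subquiver, either $\tilde A_2$, a $3$-vertex quiver with a pair of vertices carrying $\ge 2$ arrows, or $\tilde D_4$. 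For each such minimal quiver I would construct two indecomposable rigid modules $X, Y$ with $\Ext^1(X, Y) \ne 0$, so that $\Fac X$ and $\Fac Y$ are ff by AIR, and then use $\tau$-tilting mutation to show that every ff torsion class above both admits a proper downward mutation still above both, producing an infinite descending chain of ff upper bounds and hence no join.

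\textbf{Main obstacle.} The hardest step is the infinite-descent argument: ruling out not just one specific candidate join, but \emph{every} ff upper bound, requires controlling the full subgraph of the support $\tau$-tilting mutation quiver whose vertices parametrize ff torsion classes containing $\Fac X \cup \Fac Y$. My expectation is that this reduces, via the AIR bijection between $\ftors$ and support $\tau$-tilting modules, to a finite combinatorial check in each of the minimal quivers listed above, with the existence of infinitely many indecomposable $\tau$-rigid modules (guaranteed by representation infiniteness once we leave Dynkin type with $\ge 3$ vertices) providing the necessary room to mutate downward indefinitely.
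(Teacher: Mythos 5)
Your overall architecture matches the paper's: prove (d)$\Rightarrow$(a) by splitting into the Dynkin and Kronecker cases, get (c) from (b) by duality via $\TT\mapsto D(\TT^\perp)$, and prove (b)$\Rightarrow$(d) contrapositively by passing to full subquivers (the transfer step you sketch is exactly Proposition \ref{interval}: $\ftors(\Lambda/\langle e\rangle)$ is an \emph{interval} in $\ftors\Lambda$, which is what makes failure of the join-semilattice property propagate upward). However, there are two genuine gaps in the main direction. First, your list of minimal configurations is wrong: a connected non-Dynkin acyclic quiver with at least $3$ vertices and no multiple arrows need not contain $\widetilde{A}_2$ or $\widetilde{D}_4$ as a full subquiver --- $\widetilde{D}_n$ for $n\ge 5$, $\widetilde{A}_n$ for $n\ge 3$ and $\widetilde{E}_6,\widetilde{E}_7,\widetilde{E}_8$ contain none of your three configurations. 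The correct dichotomy (Proposition \ref{four cases}) is: either $Q$ contains an extended Dynkin full subquiver with $\ge 3$ vertices (no multiple arrows case), or a wild full subquiver on exactly $3$ vertices (multiple arrows case), and the extended Dynkin case must be handled uniformly, which the paper does via the quasi-simples of a tube of rank $r\ge 2$.

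Second, and more seriously, your criterion for choosing the test modules and your mechanism for excluding a join do not work as stated. Two indecomposable rigid modules with $\Ext^1(X,Y)\ne 0$ always exist already in Dynkin type, where $\ftors$ \emph{is} a lattice, so this condition cannot be the obstruction. What is actually needed is the symmetric condition $\Hom(M,N)=0=\Hom(N,M)$ and $\Ext^1(M,N)\ne 0\ne\Ext^1(N,M)$: by Ringel's theorem $\Filt(M,N)$ is then an exact abelian subcategory with two simples and with uniserial objects of arbitrary Loewy length, so $\Fac\Filt(M,N)$ cannot equal $\Fac C$ for any single module $C$ and is therefore not functorially finite. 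Producing such a pair on a wild $3$-vertex quiver is itself a nontrivial computation (the paper's Lemma \ref{1/6}, via the Coxeter matrix and Euler form). The ``infinite descent'' you hope to verify by a finite combinatorial check on the mutation quiver is not something one checks case by case: it is an automatic consequence, via the DIJ mutation theorem (Proposition \ref{result from DIJ} and Theorem \ref{existence of meet}), of the single statement that the join of your classes computed in the complete lattice $\tors(kQ)$ fails to be functorially finite. Without that reduction and without the unbounded-Loewy-length construction, your argument does not close.
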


We remark that condition (d) is equivalent to the property that all the rigid indecomposable $kQ$-modules are preprojective or preinjective.

We also show the following result.

\begin{theorem}\label{canonical algebra}
Let $\Lambda$ be a concealed canonical algebra (in particular a canonical algebra) or a tubular algebra. Then the following conditions are equivalent.
\begin{itemize}
\item[(a)] $\ftors\Lambda$ forms a lattice.
\item[(b)] $\ftors\Lambda$ forms a join-semilattice.
\item[(c)] $\ftors\Lambda$ forms a meet-semilattice.
\item[(d)] $\Lambda$ has at most 2 simple modules up to isomorphism.
\end{itemize}
\end{theorem}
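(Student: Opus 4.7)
The proof splits naturally into $(d)\Rightarrow(a),(b),(c)$ and $\neg(d)\Rightarrow\neg(a),\neg(b),\neg(c)$, and my plan is to reduce each direction to Theorem~\ref{main} wherever possible and to mimic its construction in the non-hereditary setting otherwise. For the easy direction, first observe that a tubular algebra has canonical type one of $(2,2,2,2)$, $(3,3,3)$, $(2,4,4)$, $(2,3,6)$ and hence at least six simples, so condition $(d)$ applies only to concealed canonical $\Lambda$. The one-simple case is trivial; when $\Lambda$ has exactly two simples the underlying weighted projective line is of rank two with trivial weights, so $\Lambda$ is hereditary of the form $kQ$ for a two-vertex acyclic quiver $Q$, and Theorem~\ref{main} gives the lattice structure.

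For the hard direction, assume $\Lambda$ has at least three simples. If $\Lambda$ is itself hereditary then $\Lambda=kQ$ for a connected acyclic quiver with at least three vertices; since canonical-type algebras are always of tame or wild representation type, $Q$ is non-Dynkin, and Theorem~\ref{main} applies. In the remaining (non-hereditary) case I would exploit the separating tubular family of a concealed canonical algebra (or the $\QQ_{\ge 0}\cup\{\infty\}$-parametrized family of tubular families of a tubular algebra), which guarantees that the preprojective and preinjective components of $\mod\Lambda$ contain infinitely many rigid indecomposables whose $\tau$-orbit structure mirrors that of a non-Dynkin hereditary algebra. The strategy is then to transplant the construction used in Theorem~\ref{main}: pick suitable rigid $M,N$ in the preprojective component, form $\Fac M,\Fac N\in\ftors\Lambda$, and prove that the set of functorially finite torsion classes contained in both has no maximum, detected via the bijection with support $\tau$-tilting modules of \cite{AIR}. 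An analogous construction in the preinjective component rules out the join, giving $\neg(b)$ and $\neg(c)$, whence $\neg(a)$.

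The principal obstacle is the non-hereditary case: because $\tau_\Lambda$ does not coincide with the Auslander--Reiten translate of any sub-path-algebra of $\Lambda$, the specific counterexamples from Theorem~\ref{main} do not transport mechanically, and one has to formulate their analogues intrinsically inside $\mod\Lambda$. A possible alternative route is to tilt $\Lambda$ to a hereditary tame or wild algebra $\Gamma$ and carry the failure of the lattice property across the derived equivalence; but since $\ftors$ is not a derived invariant, this transport requires an explicit translation at the level of support $\tau$-tilting modules, and producing that translation — verifying uniformly across all canonical types $(p_1,\ldots,p_t)$ that the constructed pair $(\Fac M,\Fac N)$ has no meet in $\ftors\Lambda$ — is the technical core of the argument.
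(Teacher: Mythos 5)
Your easy direction is essentially fine (and a little more explicit than the paper, which leaves the two-simple case implicit), but the hard direction is a plan rather than a proof, and the gap sits exactly where the paper's actual content lies. You propose to find the obstruction among rigid modules in the preprojective and preinjective components and you explicitly defer the ``technical core'' --- verifying that the chosen pair $(\Fac M,\Fac N)$ has no join or meet --- to an unexecuted computation or an untranslated derived equivalence. Besides being unproved, this is looking in the wrong place: for these algebras the obstruction does not come from the directed components but from the separating tubular family, exactly as in the extended Dynkin case (Proposition \ref{tame}), whose mechanism you do not reuse. The paper's Proposition \ref{key for canonical} takes the $r$ quasi-simple modules $S_1,\dots,S_r$ at the mouth of a tube $\CC$ of rank $r\ge2$ in the separating family $\RR$; since the tube is standard, each $S_i$ is $\tau$-rigid, so $\Fac S_i\in\ftors\Lambda$. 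The join of the $\Fac S_i$ in $\tors\Lambda$ is the smallest torsion class containing $\CC$, which is contained in $\add(\CC,\QQ)$ by the vanishing conditions $\Hom_\Lambda(\RR,\PP)=0$ and $\Hom_\Lambda(\QQ,\RR)=0$. If it were of the form $\Fac M$, then (using $\Hom_\Lambda(\QQ,\CC)=0$) the maximal summand of $M$ lying in $\add\CC$ would have to generate all of $\CC$; but $\add\CC$ is equivalent to the nilpotent representations of a cyclic quiver of type $\widetilde{A}_{r-1}$, whose objects have unbounded Loewy length, so no single object generates it. Hence by Theorem \ref{existence of meet} the join does not exist in $\ftors\Lambda$, and the meet statement follows by passing to $\Lambda^{\op}$ via Proposition \ref{opposite}. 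Combined with the structure theory in \cite{SS} (a concealed canonical or tubular algebra has a tube of rank $\ge2$ in its separating family if and only if it has at least $3$ simples), this completes the theorem; note that the hypothesis $r\ge2$ is essential, so identifying \emph{which} tube to use is part of the argument you are missing.

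A secondary point: your fallback of tilting to a hereditary algebra and transporting the failure of the lattice property cannot be waved at either, since (as you yourself note) $\ftors$ is not a derived invariant, and for tubular algebras there is no hereditary algebra in the derived equivalence class at all. So neither of your two proposed routes for the non-hereditary case is carried out, and the first one targets the wrong part of the Auslander--Reiten quiver.
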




\medskip
The paper is organized as follows. In section 1 we give a proof of
Theorem \ref{complete lattice} and give an important criterion for deciding
if $\ftors\Lambda$ is a lattice, together with some preliminary results.
In subsection 2.1 we show our sufficient conditions for $\ftors (kQ)$ to be a lattice.
In subsection 2.2 we show that $\ftors(kQ)$ is not a lattice for a path algebra $kQ$ of an extended Dynkin quiver $Q$ with at least 3 vertices.
In subsection 2.3 we deal with a path algebra $kQ$ of a wild quiver $Q$ with 3 vertices, and show that $\ftors(kQ)$ is not a lattice.
In subsection 2.4 we put things together to prove Theorem \ref{main}.
In subsection 2.5 we prove Theorem \ref{canonical algebra}.

\medskip\noindent
{\bf Acknowledgements.}
The authors would like to thank Otto Kerner and Claus Michael Ringel for valuable discussions.
The authors are grateful to MSRI and Oberwolfach for having had the opportunity to work together in such inspiring environments.

\section{Lattice structure of torsion classes for finite dimensional algebras}

\subsection{General results}

Let $\Lambda$ be a finite dimensional $k$-algebra.
A full subcategory $\FF$ of $\mod\Lambda$ is a \emph{torsionfree class} if it is closed under submodules, isomorphisms and extensions.
We denote by $\torf \Lambda$ the set of all torsionfree classes in $\mod \Lambda$,
and by $\ftorf \Lambda$ the set of all \emph{functorially finite} torsionfree classes
(i.e. torsionfree classes of the form $\Sub X$ for some $X\in\mod\Lambda$).
The following observation is classical.

\begin{proposition}\label{tors and torf}
\begin{itemize}
\item[(a)] We have a bijection
\[\tors \Lambda\to\torf\Lambda\ \ \ \TT\mapsto\TT^\perp:=\{X\in\mod \Lambda\mid\Hom_\Lambda(\TT,X)=0\}\]
whose inverse is given by
\[\torf \Lambda\to\tors \Lambda\ \ \ \FF\mapsto{}^\perp\FF:=\{X\in\mod \Lambda\mid\Hom_\Lambda(X,\FF)=0\}.\]
\item[(b)] \cite{S} They induce bijections between $\ftors \Lambda$ and $\ftorf \Lambda$.
\end{itemize}
\end{proposition}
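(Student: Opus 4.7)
My plan is to prove (a) by direct verification and then appeal to the classical Smal\o{} result for (b).

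For (a), I begin by checking that the two operations land in the advertised subsets. Given $\TT\in\tors\Lambda$, the subcategory $\TT^\perp$ is closed under isomorphism by construction. If $X'\hookrightarrow X$ with $\Hom_\Lambda(\TT,X)=0$, then left-exactness of $\Hom_\Lambda(T,-)$ gives an inclusion $\Hom_\Lambda(T,X')\hookrightarrow\Hom_\Lambda(T,X)=0$, so $X'\in\TT^\perp$, proving closure under submodules. For a short exact sequence $0\to X'\to X\to X''\to 0$ with $X',X''\in\TT^\perp$, the exact sequence $0\to\Hom_\Lambda(T,X')\to\Hom_\Lambda(T,X)\to\Hom_\Lambda(T,X'')$ has vanishing outer terms for each $T\in\TT$, so $X\in\TT^\perp$. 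Thus $\TT^\perp\in\torf\Lambda$, and the dual argument shows ${}^\perp\FF\in\tors\Lambda$.

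Next I show the operations are mutually inverse. The inclusions $\TT\subseteq{}^\perp(\TT^\perp)$ and $\FF\subseteq({}^\perp\FF)^\perp$ are immediate from the definitions. For the converse, take $X\in{}^\perp(\TT^\perp)$ and let $tX$ be the sum of all submodules of $X$ that lie in $\TT$. Since $\TT$ is closed under extensions and factor modules and $\mod\Lambda$ is artinian, $tX\in\TT$ and it is the largest such submodule. I claim $X/tX\in\TT^\perp$: a nonzero map $T\to X/tX$ with $T\in\TT$ would have image in $\TT$ (closure under factors); its preimage in $X$ would be an extension of this image by $tX$ and hence in $\TT$, contradicting maximality of $tX$. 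The hypothesis $X\in{}^\perp(\TT^\perp)$ then forces the canonical projection $X\to X/tX$ to vanish, so $X=tX\in\TT$. The dual argument handles $({}^\perp\FF)^\perp\subseteq\FF$.

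For (b), I would invoke Smal\o{}'s theorem \cite{S}. The task is to show that $\TT=\Fac X$ for some $X$ if and only if $\TT^\perp=\Sub Y$ for some $Y$. Given $\TT=\Fac X$, one produces such a $Y$ by taking a left $\TT$-approximation of a projective generator of $\mod\Lambda$ and letting $Y$ be its cokernel; a direct check using the universal property of the approximation and part (a) identifies $\Sub Y$ with $\TT^\perp$. The main obstacle is precisely this step: the decomposition used in (a) is soft and purely categorical, but matching the explicit generators for the $\Fac$ and $\Sub$ descriptions requires approximation-theoretic input and a careful verification that both directions of the bijection preserve functorial finiteness. Everything else reduces to routine diagram chasing.
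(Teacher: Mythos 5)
The paper offers no proof of this proposition: part (a) is labelled a classical observation and part (b) is delegated entirely to Smal\o{} \cite{S}, so your overall strategy (direct verification for (a), citation of \cite{S} for (b)) matches the paper's. Your argument for (a) is correct and is the standard one: the verification that $\TT^\perp$ is a torsionfree class, and the use of the torsion submodule $tX$ (the sum of all submodules of $X$ lying in $\TT$, which is a finite sum by finite-dimensionality and lies in $\TT$ by closure under extensions and factors) to show ${}^\perp(\TT^\perp)\subseteq\TT$, are exactly right.

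One concrete caution about (b): the explicit recipe you sketch --- take a left $\TT$-approximation $f\colon\Lambda\to T_0$ of a projective generator and set $Y=\Cok f$ --- does not produce a cogenerator of $\TT^\perp$. Already for $\TT=\{0\}=\Fac 0$ the approximation is $\Lambda\to 0$ with cokernel $0$, yet $\TT^\perp=\mod\Lambda=\Sub(D\Lambda)$; likewise for $Q\colon 1\to 2$ and $\TT=\Fac S_1$ the minimal left approximation $\Lambda\to S_1$ is surjective, so your $Y$ is $0$, while $\TT^\perp=\Sub P_1\neq\{0\}$. The genuine content of Smal\o{}'s theorem is that covariant finiteness of $\TT$ implies contravariant finiteness of $\TT^\perp$ (the other two approximations in a torsion pair come for free from the canonical sequence $0\to tX\to X\to X/tX\to 0$), and the module $Y$ has to be built more carefully than by taking a cokernel. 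Since you ultimately invoke \cite{S} for this step, as the paper does, the proposal stands, but the parenthetical construction should be deleted or replaced by an honest appeal to the reference.
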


Clearly $\tors \Lambda$ and $\ftors \Lambda$ have a structure of partially ordered sets
with respect to the inclusion relation.

\begin{definition}\label{definition of lattice}
Let $P$ be a partially ordered set and $x_i$ ($i\in I$) be elements in $P$.
If there exists a unique maximal element in the subposet $\{y\in P\mid y\le x_i,\ \forall i\in I\}$ of $P$,
we call it a \emph{meet} of $x_i$ ($i\in I$) and denote it by $\bigwedge_{i\in I}x_i$.
Dually we define a \emph{join} $\bigvee_{i\in I}x_i$.
We say that $P$ is a \emph{meet-semilattice} (respectively, \emph{join-semilattice}) if any finite subset of $P$ has a meet (respectively, join).
We say that $P$ is a \emph{lattice} if it is a join-semilattice and a meet-semilattice.
More strongly, we say that $P$ is a \emph{complete lattice} (respectively,
\emph{complete join-semilattice}, \emph{complete meet-semilattice})
if any subset of $P$ has a meet and a join (respectively, a join, a meet).

If a map $f:P\to P'$ between lattices preserves a join and a meet of any finite subset
(respectively, any subset), we call $f$ a \emph{morphism} of lattices
(respectively, complete lattices).
\end{definition}

We have the following statement.

\begin{proposition}\label{tors A is complete}
\begin{itemize}
\item[(a)] $\tors \Lambda$ and $\torf \Lambda$ are complete lattices, and we have an isomorphism $\tors \Lambda\to(\torf \Lambda)^{\op}$, $\TT\mapsto\TT^\perp$ of complete lattices.
\item[(b)] For torsion classes $\TT_i$ ($i\in I$) in $\mod \Lambda$, we have
\[\bigwedge_{i\in I}\TT_i=\bigcap_{i\in I}\TT_i\ \mbox{ and }\ \bigvee_{i\in I}\TT_i={}^\perp(\bigcap_{i\in I}\TT_i^\perp).\]
\item[(c)] For torsionfree classes $\FF_j$ ($j\in J$) in $\mod \Lambda$, we have
\[\bigwedge_{j\in J}\FF_j=\bigcap_{j\in J}\FF_j\ \mbox{ and }\ \bigvee_{j\in J}\FF_j=(\bigcap_{j\in J}{}^\perp\FF_j)^\perp.\]
\end{itemize}
\end{proposition}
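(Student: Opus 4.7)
My plan is to first establish that both $\tors\Lambda$ and $\torf\Lambda$ are complete meet-semilattices by taking intersections, and then to use the bijection of Proposition \ref{tors and torf} to transport this into the existence of joins, yielding complete lattices together with the stated formulas.

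First I would verify that for any family $\TT_i$ ($i\in I$) of torsion classes, the intersection $\bigcap_{i\in I}\TT_i$ is again a torsion class. The point is simply that closure under factor modules, extensions, and isomorphisms is a condition of the form ``if certain modules lie in the class then so does another'', and this form of condition is trivially preserved under arbitrary intersection. Hence $\bigcap_{i\in I}\TT_i$ is a lower bound for the $\TT_i$ in $\tors\Lambda$, and it clearly contains every torsion class contained in all $\TT_i$, so it is the meet. The dual argument shows $\bigcap_{j\in J}\FF_j$ is the meet of torsionfree classes. This proves the meet formulas in (b) and (c) and shows that both posets are complete meet-semilattices.

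Next I would observe that the assignment $\TT\mapsto\TT^\perp$ from Proposition \ref{tors and torf}(a) is order-reversing: if $\TT\subseteq\TT'$ then any $X$ with $\Hom_\Lambda(\TT',X)=0$ certainly satisfies $\Hom_\Lambda(\TT,X)=0$, so $(\TT')^\perp\subseteq\TT^\perp$. Combined with the fact that it is a bijection with inverse $\FF\mapsto{}^\perp\FF$ (also order-reversing by the same argument), we obtain an isomorphism $\tors\Lambda\to(\torf\Lambda)^{\op}$ of posets, giving the isomorphism asserted in (a).

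Finally, any anti-isomorphism of posets sends meets to joins and vice versa. Since $\torf\Lambda$ has arbitrary meets given by intersection, its opposite has arbitrary joins given by the same intersection; pulling back along $\TT\mapsto\TT^\perp$ therefore provides arbitrary joins in $\tors\Lambda$. Concretely, $\bigvee_{i\in I}\TT_i$ corresponds under the bijection to $\bigwedge_{i\in I}\TT_i^\perp=\bigcap_{i\in I}\TT_i^\perp$, so $\bigvee_{i\in I}\TT_i={}^\perp(\bigcap_{i\in I}\TT_i^\perp)$, proving the join formula in (b); the join formula in (c) follows symmetrically. This completes the proof that both posets are complete lattices. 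There is no real obstacle here: the only thing to be careful about is the direction of the order reversal and the fact that the bijection is well-defined, both supplied by Proposition \ref{tors and torf}.
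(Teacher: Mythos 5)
Your proof is correct and follows essentially the same route as the paper: meets of torsion(-free) classes are given by intersections, and joins are obtained by transporting meets through the order-reversing bijection $\TT\mapsto\TT^\perp$. You simply spell out in more detail the steps the paper declares to be clear.
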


\begin{proof}
It is clear that a meet of torsion classes $\TT_i$ ($i\in I$) is given by $\bigcap_{i\in I}\TT_i$.
Dually a meet of torsionfree classes $\FF_j$ ($j\in J$) is clearly given by $\bigcap_{j\in J}\FF_j$.

It is also clear that the bijection in Proposition \ref{tors and torf} gives an isomorphism $\tors \Lambda\to(\torf \Lambda)^{\op}$ of partially ordered sets.
Hence ${}^\perp(\bigcap_{i\in I}\TT_i^\perp)$ gives a join of $\TT_i$ ($i\in I$),
and $\bigvee_{j\in J}\FF_j=(\bigcap_{j\in J}{}^\perp\FF_j)^\perp$ gives a join of $\FF_j$ ($j\in J$).
\end{proof}

\begin{proposition}\label{opposite}
Let $\Lambda$ be a finite dimensional $k$-algebra. Then 
\begin{itemize}
\item[(a)] We have an isomorphism of complete lattices:
\[\tors\Lambda\to(\tors(\Lambda^{\op}))^{\op},\ \ \ \TT\mapsto D(\TT^\perp).\]
\item[(b)] The map in (a) induces a bijection $\ftors\Lambda\to\ftors(\Lambda^{\op})$.
In particular, $\ftors \Lambda$ forms a meet-semilattice (respectively, complete meet-semilattice)
if and only if $\ftors(\Lambda^{\op})$ forms a join-semilattice (respectively, complete join-semilattice).
\end{itemize}
\end{proposition}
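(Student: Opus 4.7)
The plan is to factor the map $\TT \mapsto D(\TT^\perp)$ through the standard $k$-duality $D = \Hom_k(-,k)$ and the perpendicular bijection from Proposition~\ref{tors and torf}, and then to check that each factor is an isomorphism of complete lattices (with opposites inserted where needed).

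For part (a), I would view $D$ as an exact contravariant equivalence $\mod\Lambda \to \mod\Lambda^{\op}$. Because $D$ sends submodules to factor modules (and vice versa) and preserves short exact sequences, it induces an inclusion-preserving bijection between torsionfree classes of $\mod\Lambda$ and torsion classes of $\mod\Lambda^{\op}$. Since $D$ also commutes with arbitrary intersections, this bijection is in fact an isomorphism $\torf\Lambda \to \tors(\Lambda^{\op})$ of complete lattices. Composing it with the isomorphism of complete lattices $\tors\Lambda \to (\torf\Lambda)^{\op}$, $\TT \mapsto \TT^\perp$, from Proposition~\ref{tors A is complete}(a), yields the desired isomorphism $\tors\Lambda \to (\tors(\Lambda^{\op}))^{\op}$ sending $\TT$ to $D(\TT^\perp)$.

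For part (b), I would verify that each factor restricts to the functorially finite subposets. By Proposition~\ref{tors and torf}(b) the perpendicular bijection already restricts to $\ftors\Lambda \leftrightarrow \ftorf\Lambda$. For the duality factor, the identity $D(\Sub X) = \Fac(DX)$ for $X \in \mod\Lambda$ (since $Y \hookrightarrow X$ corresponds under $D$ to $DX \twoheadrightarrow DY$) shows that $D$ carries $\ftorf\Lambda$ onto $\ftors(\Lambda^{\op})$. Hence the composite restricts to a bijection $\ftors\Lambda \to \ftors(\Lambda^{\op})$, which is order-reversing as a restriction of the lattice anti-isomorphism from (a). The semilattice equivalence is then a purely poset-theoretic consequence: any order-reversing bijection between posets turns meets in the source into joins in the target (and vice versa), including the complete versions.

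I do not expect a genuine obstacle; the argument is essentially bookkeeping with the standard $k$-duality together with Propositions~\ref{tors and torf} and~\ref{tors A is complete}. The one subtlety worth flagging is that the meets and joins in (b) must be read inside the subposets $\ftors\Lambda$ and $\ftors(\Lambda^{\op})$ (they need not coincide with the ambient meets in $\tors\Lambda$ and $\tors(\Lambda^{\op})$), but this does not affect the argument, since the order-reversing property transfers semilattice structures regardless of how the meets are realised.
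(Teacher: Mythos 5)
Your proposal is correct and follows essentially the same route as the paper: factoring $\TT\mapsto D(\TT^\perp)$ through the perpendicular bijection $\tors\Lambda\to(\torf\Lambda)^{\op}$ of Proposition~\ref{tors A is complete}(a) and the duality isomorphism $\torf\Lambda\to\tors(\Lambda^{\op})$, then restricting via Proposition~\ref{tors and torf}(b) and the identity $D(\Sub X)=\Fac(DX)$. The paper's proof is just a terser version of the same argument, and your closing caveat about where the meets and joins are computed is a reasonable (if inessential) precaution.
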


\begin{proof}
(a) We have an isomorphism $\torf \Lambda\to\tors(\Lambda^{\rm op})$, $\FF\mapsto D(\FF)$ of complete lattices.
Thus the assertion follows from Proposition \ref{tors A is complete}.

(b) This follows from Proposition \ref{tors and torf} since $\FF$ is functorially finite if and only if so is $D(\FF)$.
\end{proof}

We now show that $\ftors \Lambda$ being a lattice is preserved by factoring by ideals $\left<e\right>$,
where $e$ is an idempotent element in $\Lambda$.

\begin{proposition}\label{interval}
Let $\Lambda$ be a finite dimensional $k$-algebra, and $e$ an idempotent in $\Lambda$.
\begin{itemize}
\item[(a)]  $\ftors(\Lambda/\langle e\rangle)$ is the interval
$\{ \TT\in\ftors\Lambda\mid0\subseteq\TT\subseteq\mod(\Lambda/\langle e\rangle)\}$ in $\ftors\Lambda$.
\item[(b)] If $\ftors \Lambda$ is a lattice (respectively, complete lattice), then $\ftors(\Lambda/\langle e \rangle)$ is a lattice (respectively, complete lattice).
\end{itemize}
\end{proposition}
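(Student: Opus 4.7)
The plan for part (a) is to identify $\mod(\Lambda/\langle e\rangle)$ with the full subcategory of $\mod\Lambda$ consisting of modules $M$ with $Me=0$, and then observe that this subcategory is closed under submodules, quotient modules and extensions inside $\mod\Lambda$. In particular a subcategory $\TT\subseteq\mod(\Lambda/\langle e\rangle)$ is a torsion class in $\mod(\Lambda/\langle e\rangle)$ if and only if it is a torsion class in $\mod\Lambda$, so as sets of torsion classes we obtain
\[\tors(\Lambda/\langle e\rangle)=\{\TT\in\tors\Lambda\mid\TT\subseteq\mod(\Lambda/\langle e\rangle)\}.\]

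The subtle point is matching the notion of functorial finiteness on the two sides. Using the criterion recalled in the introduction that a torsion class is functorially finite if and only if it is of the form $\Fac X$, I would argue both inclusions. If $\TT=\Fac_{\Lambda/\langle e\rangle}X$ for some $X\in\mod(\Lambda/\langle e\rangle)$, then because $\mod(\Lambda/\langle e\rangle)$ is closed under factor modules inside $\mod\Lambda$, we have $\Fac_\Lambda X=\Fac_{\Lambda/\langle e\rangle}X=\TT$, so $\TT\in\ftors\Lambda$. Conversely, suppose $\TT\subseteq\mod(\Lambda/\langle e\rangle)$ satisfies $\TT=\Fac_\Lambda Y$ for some $Y\in\mod\Lambda$. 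Set $Y':=Y/Ye$, which lies in $\mod(\Lambda/\langle e\rangle)$. Since $Y'$ is a quotient of $Y\in\TT$, we have $Y'\in\TT$; on the other hand, every surjection $Y^n\twoheadrightarrow M$ with $M\in\TT$ factors through $(Y')^n$ because $Me=0$. Hence $\TT=\Fac_\Lambda Y'=\Fac_{\Lambda/\langle e\rangle}Y'$, which is functorially finite in $\mod(\Lambda/\langle e\rangle)$. This finishes (a).

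For part (b), the key observation is that $\mod(\Lambda/\langle e\rangle)=\Fac_\Lambda(\Lambda/\langle e\rangle)$ is itself a functorially finite torsion class in $\mod\Lambda$, hence a bona fide element of $\ftors\Lambda$. By (a), $\ftors(\Lambda/\langle e\rangle)$ is literally the interval $[0,\mod(\Lambda/\langle e\rangle)]$ inside the poset $\ftors\Lambda$. Assuming $\ftors\Lambda$ is a lattice and $\TT_1,\TT_2\in\ftors(\Lambda/\langle e\rangle)$, the meet $\TT_1\wedge\TT_2$ satisfies $\TT_1\wedge\TT_2\subseteq\TT_1\subseteq\mod(\Lambda/\langle e\rangle)$, while the join $\TT_1\vee\TT_2$ is the least upper bound in $\ftors\Lambda$ of $\TT_1$ and $\TT_2$, both of which are also $\subseteq\mod(\Lambda/\langle e\rangle)$; since $\mod(\Lambda/\langle e\rangle)$ is itself an upper bound in $\ftors\Lambda$, we get $\TT_1\vee\TT_2\subseteq\mod(\Lambda/\langle e\rangle)$. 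Thus the interval is closed under binary meets and joins in $\ftors\Lambda$, which makes $\ftors(\Lambda/\langle e\rangle)$ a lattice. The same argument applied to arbitrary families gives the complete lattice case.

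The main obstacle is the functorial-finiteness matching in (a); after that, (b) is essentially formal from the general fact that a bounded interval in a (complete) lattice is itself a (complete) lattice, once we know that the upper bound $\mod(\Lambda/\langle e\rangle)$ lies in $\ftors\Lambda$.
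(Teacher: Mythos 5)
Your argument is correct, but it takes a more self-contained route than the paper, which simply cites \cite[Theorem 2.7 and Proposition 2.27]{AIR} for part (a) and then invokes the general fact that an interval of a (complete) lattice is a (complete) lattice for part (b). What you do differently is prove the identification in (a) directly: you match up torsion classes via the closure of $\mod(\Lambda/\langle e\rangle)$ under subobjects, quotients and extensions in $\mod\Lambda$, and you match up functorial finiteness via the $\Fac X$ characterization from \cite{AS}, with the key step being the replacement of a generator $Y$ by $Y':=Y\otimes_\Lambda\Lambda/\langle e\rangle$ (note that, for right modules, the submodule you must kill is $Ye\Lambda$ rather than $Ye$ itself --- a purely notational point, since the factoring argument goes through verbatim). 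This buys a proof readable without unwinding the support $\tau$-tilting machinery of \cite{AIR}, at the cost of re-deriving a known statement. Your treatment of (b) coincides with the paper's interval argument, and you usefully make explicit the point the paper leaves implicit, namely that the top of the interval, $\mod(\Lambda/\langle e\rangle)=\Fac(\Lambda/\langle e\rangle)$, is itself an element of $\ftors\Lambda$; the only thing worth adding is that the join of $\TT_1,\TT_2\in\ftors\Lambda$ is genuinely the least upper bound (not merely a minimal one), which follows since it agrees with the join in the complete lattice $\tors\Lambda$ (Remark \ref{tors/ftors}), so it is indeed trapped below $\mod(\Lambda/\langle e\rangle)$ as you claim.
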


\begin{proof}
(a) This is shown in \cite[Theorem 2.7]{AIR} and \cite[Proposition 2.27]{AIR}.

(b) This is a consequence of (a), using that an interval of a lattice (respectively, complete lattice) is again a lattice (respectively, complete lattice).
 \end{proof}

\subsection{Proof of Theorem \ref{complete lattice}}

We denote by $\tau$ the Auslander-Reiten translation of $\Lambda$.

\begin{definition}\label{define tau-rigid}
\begin{itemize}
\item[(a)] We call $M\in\mod \Lambda$ \emph{$\tau$-rigid} if $\Hom_\Lambda(M,\tau M)=0$.
We call $M\in\mod \Lambda$ \emph{$\tau$-tilting} if it is $\tau$-rigid
and $|M|=|\Lambda|$ holds, where $|M|$ is the number of non-isomorphic
indecomposable direct summands of $M$.
\item[(b)] We call $M\in\mod \Lambda$ \emph{support $\tau$-tilting} if
there exists an idempotent $e$ of $\Lambda$ such that $M$ is a $\tau$-tilting
$(\Lambda/\langle e\rangle)$-module.
\end{itemize}
\end{definition}

We denote by $\sttilt \Lambda$ the set of isomorphism classes of basic
support $\tau$-tilting $\Lambda$-modules.
Then we have the following result.

\begin{proposition}\label{basic bijection}\cite[Theorem 2.7]{AIR}
There exists a bijection $\sttilt \Lambda\to\ftors \Lambda$ given by $M\mapsto\Fac M$.
\end{proposition}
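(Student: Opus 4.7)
The plan is to construct an inverse map $\Psi : \ftors\Lambda \to \sttilt\Lambda$ to the assignment $\Phi : M \mapsto \Fac M$ and verify that both compositions are the identity. The natural candidate for $\Psi$ sends a functorially finite torsion class $\TT$ to $P(\TT)$, the direct sum of one representative from each isomorphism class of indecomposable Ext-projective objects in $\TT$, i.e.\ those $N \in \TT$ with $\Ext^1_\Lambda(N,\TT)=0$.

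First I would verify that $\Phi$ is well-defined. Given basic support $\tau$-tilting $M$, pick an idempotent $e \in \Lambda$ with $M$ a $\tau$-tilting module over $\Lambda' := \Lambda/\langle e\rangle$. Then $\Fac M \subseteq \mod\Lambda'$ is closed under factor modules by construction, and the Auslander--Reiten formula combined with $\Hom_{\Lambda'}(M,\tau_{\Lambda'}M)=0$ yields $\Ext^1_{\Lambda'}(M,\Fac M)=0$, from which closure of $\Fac M$ under extensions follows by pulling back along surjections from powers of $M$. Functorial finiteness is immediate because $M$ itself provides covariant approximations and right $\Fac M$-approximations are obtained as images of right $\add M$-approximations; Proposition \ref{interval}(a) then places $\Fac M$ in $\ftors\Lambda$.

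Next I would verify that $\Psi$ is well-defined. The Auslander--Smal\o{} characterization says $N \in \TT$ is Ext-projective precisely when $\Hom_\Lambda(N,\tau N')=0$ for all $N'\in\TT$, so $P(\TT)$ is $\tau$-rigid. Choosing $e$ so that $\Lambda e$ accounts for exactly the simples absent from $\TT$, each indecomposable Ext-projective is canonically a $\Lambda'$-module. The main obstacle is the rank count $|P(\TT)|=|\Lambda'|$, which upgrades $P(\TT)$ from merely $\tau$-rigid to $\tau$-tilting over $\Lambda'$: for this I would invoke a Bongartz-type completion argument, starting from a minimal right $\add P(\TT)$-approximation of $\Lambda'$ and producing, through a careful analysis of the kernel, enough additional Ext-projective summands to saturate the count.

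Finally, both compositions must be identities: $\Fac P(\TT) = \TT$ since every $N\in\TT$ arises as the image of a right $\add P(\TT)$-approximation obtained from covariant finiteness, and $P(\Fac M)=M$ because the indecomposable summands of a $\tau$-tilting module are exactly the indecomposable Ext-projectives of $\Fac M$ by a further application of the Auslander--Smal\o{} criterion together with the equality $|M|=|\Lambda'|$. The heart of the proof is therefore the completion/counting step in the construction of $\Psi$; once that is in place, the remaining verifications are largely formal.
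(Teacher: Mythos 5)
The paper does not actually prove this statement: it is quoted verbatim from \cite[Theorem~2.7]{AIR}, and your outline --- inverse map $\TT \mapsto P(\TT)$ via indecomposable Ext-projectives, the Auslander--Smal\o{} criterion to obtain $\tau$-rigidity and $\Fac P(\TT) = \TT$, and a Bongartz-type completion to saturate the count $|P(\TT)| = |\Lambda/\langle e\rangle|$ --- is precisely the strategy of that cited proof, with the genuinely hard step (the count, which is where essentially all the work in \cite{AIR} lies) acknowledged but deferred rather than carried out. One slip worth correcting: the Auslander--Smal\o{} criterion characterizes Ext-projectivity of $N$ in $\TT = \Fac X$ by $\Hom_\Lambda(X, \tau N) = 0$, equivalently $\Hom_\Lambda(N', \tau N) = 0$ for all $N' \in \TT$, not by $\Hom_\Lambda(N, \tau N') = 0$ as you wrote; the arguments of $\Hom$ are reversed, although the desired conclusion $\Hom_\Lambda(P(\TT), \tau P(\TT)) = 0$ still follows once the correct condition is applied to every Ext-projective summand.
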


Using the bijection in Proposition \ref{basic bijection}, we regard
$\sttilt \Lambda$ as a partially ordered set which is isomorphic to $\ftors \Lambda$.

\begin{definition}\label{tau-rigid finite}\cite{DIJ}
We say that $\Lambda$ is \emph{$\tau$-rigid finite} if there are only finitely many indecomposable
$\tau$-rigid $\Lambda$-modules.
This is equivalent to $|\sttilt \Lambda|<\infty$, and to $|\ftors \Lambda|<\infty$.
\end{definition}

For example, any local algebra is $\tau$-rigid finite. In fact $\sttilt \Lambda=\{\Lambda,0\}$ holds in this case.
A path algebra $kQ$ of an acyclic quiver $Q$ is $\tau$-rigid finite if and only if $Q$ is a Dynkin quiver.
On the other hand, any preprojective algebra of Dynkin type is $\tau$-rigid finite \cite{M}.

We say that two non-isomorphic basic support $\tau$-tilting $\Lambda$-modules $M$ and $N$ are \emph{mutations} of each other if $M=X\oplus U$, $N=Y\oplus U$ and $X$ and $Y$ are either $0$ or indecomposable.
Then any support $\tau$-tilting $\Lambda$-module has exactly $n$ mutations.

The following results play a crucial role.

\begin{proposition}\label{infinite chain}
Let $\Lambda$ be a finite dimensional $k$-algebra.
\begin{itemize}
\item[(a)] \cite[Theorem 2.35]{AIR} If $M$ and $N$ are support $\tau$-tilting $\Lambda$-modules such that $M>N$,
then there exists a mutation $L$ of $N$ such that $M\ge L>N$.
\item[(b)] \cite[Proposition 3.2]{DIJ} Assume that $\Lambda$ is not $\tau$-rigid finite.
Then there exists an infinite descending chain of mutations
$\Lambda=M_0>M_1>M_2>\cdots$.
\item[(c)] \cite[Theorem 3.1]{DIJ} $\Lambda$ is $\tau$-rigid finite if and only if every torsion class in $\mod A$ is functorially finite.
\end{itemize}
\end{proposition}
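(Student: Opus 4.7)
The plan is to handle the three parts in order, reducing (b) to (a) by a pigeonhole argument and (c) to (b) by a chain-construction argument; since all three are cited from [AIR] and [DIJ], I only outline the strategy.

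For part (a), I would translate via Proposition \ref{basic bijection} to the statement: if $\Fac M \supsetneq \Fac N$, then some upward cover $\Fac L$ of $\Fac N$ in $\ftors \Lambda$ lies in the interval $(\Fac N, \Fac M]$. The mutation theory of [AIR] identifies the upward covers of $\Fac N$ with certain mutations of $N$, each obtained by replacing an indecomposable summand $X$ of $N$ by the cokernel of its minimal left $\add(N/X)$-approximation. To locate one inside $[N, M]$, I would pick an indecomposable $Y \in \Fac M \setminus \Fac N$, take its minimal left $\Fac N$-approximation (which exists since $\Fac N$ is functorially finite), and chase the cokernel to extract a summand of $N$ whose upward mutation stays inside $\Fac M$.

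For part (b), I would construct the chain inductively, maintaining the invariant that the down-set $\{N \in \sttilt \Lambda : N \leq M_k\}$ is infinite. The base case $M_0 := \Lambda$ holds since $\sttilt \Lambda$ is infinite by hypothesis and $\Lambda$ is its maximum. For the inductive step, (a) tells us that every $N < M_k$ lies weakly below some proper mutation of $M_k$. There are at most $|\Lambda|$ such mutations, so by pigeonhole one of them, $M_{k+1}$, inherits an infinite down-set, completing the induction.

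For part (c), the key direction is: if $\Lambda$ is not $\tau$-rigid finite, some torsion class fails to be functorially finite. Applying (b) gives an infinite chain $\Lambda = M_0 > M_1 > \cdots$; set $\TT := \bigcap_{k \geq 0} \Fac M_k$, which is a torsion class by Proposition \ref{tors A is complete}(b). If $\TT$ were equal to $\Fac M$ for some support $\tau$-tilting $M$, then $M < M_k$ for all $k$, and repeated application of (a) between $M$ and each $M_k$ together with the fact that $|L| \leq |\Lambda|$ for every $L \in \sttilt \Lambda$ would force the descending chain to terminate, a contradiction. The converse direction is handled by a dual argument in [DIJ]. The main obstacle will be part (a): existence of mutations is standard from [AIR, Theorem 2.30], but pinning down a mutation inside a prescribed interval requires careful tracking of how minimal approximations behave when passing between $\Fac N$ and $\Fac M$, and this is where the real technical work sits.
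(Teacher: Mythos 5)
The paper offers no proof of this proposition: all three parts are quoted verbatim from \cite{AIR} and \cite{DIJ}, so there is no in-paper argument to compare yours against. Your sketches are broadly in the spirit of the proofs in those references, but as written they contain two concrete gaps.

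First, in part (b) your pigeonhole step invokes (a) to conclude that ``every $N < M_k$ lies weakly below some proper mutation of $M_k$.'' That is not what (a) says: (a) produces a mutation of the \emph{smaller} module $N$ sitting in the interval $(N, M_k]$, which says nothing about the finitely many mutations of $M_k$ and hence cannot feed the pigeonhole over the down-sets of those mutations. What you need is the companion statement -- if $\Fac M_k \supsetneq \Fac N$ then some mutation $L$ of $M_k$ satisfies $\Fac M_k \supsetneq \Fac L \supset \Fac N$ -- which is true and is exactly Proposition \ref{result from DIJ}(a) in this paper (from \cite[Theorem 3.3]{DIJ}), but it is a separate result, not a consequence of (a) as stated. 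With that substitution your induction goes through.

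Second, in part (c) the contradiction is asserted rather than derived: ``repeated application of (a) \ldots together with $|L|\le|\Lambda|$ would force the descending chain to terminate'' is not an argument, since the chain $M_0 > M_1 > \cdots$ is genuinely infinite and nothing forces it to stop. The correct finish, which this paper itself uses in proving (c)$\Rightarrow$(e) of Theorem \ref{complete lattice}, is: if $\TT=\bigcap_k\Fac M_k=\Fac M$, let $N_1,\dots,N_n$ be the mutations of $M$; by (a) each set $I_k=\{j : M_k\ge N_j>M\}$ is nonempty, these finite sets are nested, so their intersection contains some $j$ with $M_k\ge N_j>M$ for all $k$, whence $\Fac N_j\subseteq\bigcap_k\Fac M_k=\Fac M$, contradicting $N_j>M$. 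Finally, the converse implication of (c) ($\tau$-rigid finiteness implies every torsion class is functorially finite) is not a ``dual argument''; it requires a genuinely different proof (one shows a torsion class equals $\Fac M$ for a suitable maximal support $\tau$-tilting $M$ with $\Fac M\subseteq\TT$, using the finiteness of $\sttilt\Lambda$), and deferring it wholesale to \cite{DIJ} is the honest thing to do, but it should not be described as dual.
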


Now we are ready to prove Theorem \ref{complete lattice}.

(d)$\Rightarrow$(a) This is immediate from Proposition \ref{tors A is complete}(a).

(a)$\Rightarrow$(c) This is clear.

(c)$\Rightarrow$(e) We assume that $\ftors \Lambda$ is a complete meet-semilattice and that $\Lambda$ is not $\tau$-rigid finite.
Take an infinite descending chain in Proposition \ref{infinite chain}(b).
Since $\sttilt \Lambda\simeq\ftors \Lambda$ is a complete meet-semilattice by our assumption, there exists a meet $M$ of $M_i$ ($i\ge0$) in $\sttilt \Lambda$.
Let $N_1,\ldots,N_n$ be all mutations of $M$.
Since $\Fac M_i\supsetneq\Fac M$, the set $I_i:=\{1\le k\le n\mid M_i\ge N_k>M\}$ is non-empty by Proposition \ref{infinite chain}(a).
Since we have a descending chain
\[I_0\supset I_1\supset I_2\supset\cdots\]
of finite non-empty sets, their intersection $I:=\bigcap_{i\ge0}I_i$ is also non-empty.
Then any $k\in I$ satisfies $M_i\ge N_k>M$ for all $i$.
This is a contradiction since $M$ is a meet of $M_i$ ($i\ge0$).

(e)$\Rightarrow$(d) This follows from Proposition \ref{infinite chain}(c).

(a)$\Leftrightarrow$(b) We have already shown that the conditions (a), (c), (d) and (e) are equivalent.
Replacing $\Lambda$ by $\Lambda^{\op}$, we have that 
(a) for $\Lambda^{\op}$ is equivalent to (c) for $\Lambda^{\op}$.
Using Proposition \ref{opposite}(b), we have the assertion.
\qed

\subsection{A criterion for the existence of joins and meets}
In this subsection, we need the following result, which improves Proposition \ref{infinite chain}(a).

\begin{proposition}\cite[Theorem 3.3]{DIJ}\label{result from DIJ}
Let $M$ be a support $\tau$-tilting $\Lambda$-module and $\TT$ a torsion class in $\mod \Lambda$.
\begin{itemize}
\item[(a)] If $\Fac M\supsetneq\TT$, then there exists a mutation $N$ of $M$ satisfying $\Fac M\supsetneq\Fac N\supset\TT$.
\item[(b)] If $\Fac M\subsetneq\TT$, then there exists a mutation $N$ of $M$ satisfying $\Fac M\subsetneq\Fac N\subset\TT$.
\end{itemize}
\end{proposition}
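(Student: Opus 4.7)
I would first reduce part (b) to part (a). By Proposition \ref{opposite}, there is an anti-isomorphism of complete lattices $\tors\Lambda\simeq(\tors\Lambda^{\op})^{\op}$ under which the $\Fac$-construction and support $\tau$-tilting mutation on $\Lambda$ correspond to the dual constructions on $\Lambda^{\op}$. A statement of type (b) for $\Lambda$ transports under this anti-isomorphism to a statement of type (a) for $\Lambda^{\op}$, so it suffices to establish (a) for every finite dimensional $k$-algebra.

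For (a), write $M=\bigoplus_{i=1}^{r}M_i$ with $M_i$ indecomposable and pairwise non-isomorphic. Because $\TT$ is closed under factor modules, $M\in\TT$ would give $\Fac M\subseteq\TT$, contradicting $\TT\subsetneq\Fac M$; hence there is some index with $M_i\notin\TT$. For each such $i$ I would form the left mutation $N^{(i)}$ of $M$ at $M_i$ via the minimal left $\add(M/M_i)$-approximation sequence $M_i\to U_i\to N_i$, taking $N^{(i)}=(M/M_i)\oplus N_i$ (with $N_i=0$ if the approximation is surjective). By the Adachi--Iyama--Reiten mutation theory this is a well-defined support $\tau$-tilting module with $\Fac N^{(i)}\subsetneq\Fac M$ and $M_i\notin\Fac N^{(i)}$.

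The heart of the argument is to select such an $i$ so that moreover $\TT\subseteq\Fac N^{(i)}$. The natural device is the brick labelling of the Hasse quiver of $\tors\Lambda$: each cover relation $\Fac M\gtrdot\Fac N^{(i)}$ carries a canonical brick label $S_i\in\Fac M\setminus\Fac N^{(i)}$, and for any torsion class $\TT'\subseteq\Fac M$ one has $\TT'\subseteq\Fac N^{(i)}$ if and only if $S_i\notin\TT'$. Granting this characterization, the desired $i$ is any index with $M_i\notin\TT$ and $S_i\notin\TT$. If no such $i$ existed, then all the exit bricks $S_i$ (over indices with $M_i\notin\TT$) would lie in $\TT$; combined with the identification of $S_i$ with the simple objects of the $\tau$-perpendicular wide subcategory attached to $M/M_i$, one argues that these bricks together with the Ext-projectives of $\TT$ generate, under extensions and factor modules, all of $\Fac M$, forcing $\TT=\Fac M$, a contradiction.

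The main obstacle is the brick-labelling machinery itself and the final recovery step. Both rest on a detailed analysis of the approximation triangle defining mutation, and on identifying the $S_i$ as simple objects of the appropriate wide subcategory so that they control which modules of $\Fac M$ survive into $\Fac N^{(i)}$. Once these correspondences are in place the argument proceeds by a finite combinatorial selection; without them one is forced into an open-ended case analysis on quotients of $M$ that does not close for arbitrary $\TT$.
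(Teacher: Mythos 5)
First, a point of comparison: the paper does not prove this proposition at all --- it is quoted verbatim from \cite[Theorem 3.3]{DIJ} --- so there is no in-paper argument to measure your proposal against; it has to stand on its own as a proof of the cited result, and as written it does not. Your reduction of (b) to (a) via $\Lambda^{\op}$ is fine (the order-reversing bijection $\sttilt\Lambda\to\sttilt(\Lambda^{\op})$ of \cite{AIR} is compatible with mutation). The problem is that in part (a) the two steps carrying all the content are exactly the ones you defer. (i) The characterization ``$\TT'\subseteq\Fac N^{(i)}$ if and only if $S_i\notin\TT'$'' is true, but it rests on the brick-labelling theorem: that $\Fac M\cap(\Fac N^{(i)})^{\perp}=\Filt(S_i)$ for a single brick $S_i$, and that $\Fac N^{(i)}$ is a cover of $\Fac M$ in all of $\tors\Lambda$ rather than merely in $\ftors\Lambda$. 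These facts are at least as deep as the proposition itself and in the literature are developed alongside or after \cite{DIJ}. (ii) The closing step --- ``if every exit brick $S_i$ lies in $\TT$ then $\TT=\Fac M$'' --- is asserted rather than argued: the appeal to Ext-projectives of $\TT$ (which need not exist when $\TT$ is not functorially finite) generating $\Fac M$ under extensions and factor modules is not a proof. What would actually close this step is the canonical join representation of $\Fac M$ as the join of the smallest torsion classes containing the downward labels $S_i$, again a theorem that postdates and partly depends on the very statement you are proving.

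There is also a smaller slip: for an index $i$ with $M_i\notin\TT$, the left mutation at $M_i$ need not satisfy $\Fac N^{(i)}\subsetneq\Fac M$; by \cite{AIR} this holds precisely when $M_i\notin\Fac(M/M_i)$, so you must restrict attention to the indices giving genuine downward arrows (and then argue separately that the relevant index can be chosen among these). In summary, your outline sketches a viable modern route through brick labels, but it postpones precisely the hard steps --- as you yourself acknowledge in the final paragraph --- so it does not constitute a proof of the proposition.
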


Immediately we have the following property of non-functorially finite torsion classes.

\begin{proposition}\label{non-functorially finite}
Let $\Lambda$ be a finite dimensional $k$-algebra, and $\TT$ a torsion class in $\mod \Lambda$
which is not functorially finite.
\begin{itemize}
\item[(a)] For any $\TT'\in\ftors \Lambda$ satisfying $\TT'\supsetneq\TT$, there exists $\TT''\in\ftors \Lambda$
satisfying $\TT'\supsetneq\TT''\supset\TT$.
\item[(b)] For any $\TT'\in\ftors \Lambda$ satisfying $\TT'\subsetneq\TT$, there exists $\TT''\in\ftors \Lambda$
satisfying $\TT'\subsetneq\TT''\subset\TT$.
\end{itemize}
\end{proposition}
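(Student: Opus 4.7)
The proposition is an almost immediate consequence of Proposition \ref{result from DIJ}, and the plan is simply to unpack what functorial finiteness means in terms of support $\tau$-tilting modules and then apply that proposition.

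For part (a), I would start by using Proposition \ref{basic bijection} to write the functorially finite torsion class $\TT'$ as $\TT' = \Fac M$ for some basic support $\tau$-tilting $\Lambda$-module $M$. Since $\Fac M = \TT' \supsetneq \TT$, we are exactly in the situation of Proposition \ref{result from DIJ}(a), which furnishes a mutation $N$ of $M$ satisfying
\[
\Fac M \;\supsetneq\; \Fac N \;\supset\; \TT.
\]
Setting $\TT'':=\Fac N$ then gives a torsion class in $\ftors\Lambda$ with $\TT'\supsetneq\TT''\supset\TT$, which is the desired intermediate class. (Note that $\TT''$ is automatically distinct from $\TT$ since $\TT''$ is functorially finite while $\TT$ is not, so the second inclusion is in fact strict, though we only need $\supset$ for the statement.)

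Part (b) is handled symmetrically, this time using Proposition \ref{result from DIJ}(b): writing $\TT' = \Fac M$ and invoking that proposition with $\Fac M \subsetneq \TT$ produces a mutation $N$ with $\Fac M \subsetneq \Fac N \subset \TT$, and $\TT'':=\Fac N$ is the required class.

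There is no real obstacle here: the work has already been done in Proposition \ref{result from DIJ}, and the only conceptual point is to observe that every functorially finite torsion class has the form $\Fac M$ for a support $\tau$-tilting $M$, which is precisely Proposition \ref{basic bijection}. The substantive content of the statement is that one can always interpolate a functorially finite torsion class between a functorially finite one and a non-functorially finite one — but the mutation step supplied by Proposition \ref{result from DIJ} produces exactly such an interpolant in one move.
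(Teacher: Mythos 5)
Your proof is correct and is essentially the paper's own argument: the paper simply states that the result follows immediately from Propositions \ref{basic bijection} and \ref{result from DIJ}, and your write-up just spells out that deduction (write $\TT'=\Fac M$ for a support $\tau$-tilting $M$, apply the mutation statement, take $\TT''=\Fac N$). Nothing further is needed.
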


\begin{proof}
The statement follows immediately from Propositions \ref{basic bijection} and \ref{result from DIJ}.
\end{proof}

We give a more explicit criterion for existence of a meet and a join.

\begin{theorem}\label{existence of meet}
Let $\Lambda$ be a finite dimensional $k$-algebra.
\begin{itemize}
\item[(a)] A subset $\{\TT_i\mid i\in I\}$ of $\ftors \Lambda$ has a meet
if and only if $\bigcap_{i\in I}\TT_i$ is functorially finite.
\item[(b)] A subset $\{\TT_i\mid i\in I\}$ of $\ftors \Lambda$ has a join
if and only if ${}^\perp(\bigcap_{i\in I}\TT_i^\perp)$ is functorially finite.
\end{itemize}
\end{theorem}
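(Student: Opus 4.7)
The strategy is to use the fact that the larger poset $\tors\Lambda$ is already a complete lattice (Proposition \ref{tors A is complete}) and to measure when its meets and joins remain inside the subset $\ftors\Lambda$. For (a), Proposition \ref{tors A is complete}(b) identifies $\bigcap_{i\in I}\TT_i$ as the meet of $\{\TT_i\}_{i\in I}$ in $\tors\Lambda$; for (b), ${}^\perp(\bigcap_{i\in I}\TT_i^\perp)$ is the join in $\tors\Lambda$. So in both cases I already have an explicit candidate, and only need to decide when passing to the sub-poset $\ftors\Lambda$ preserves it.

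For the ``if'' direction of (a): if $\bigcap\TT_i$ is functorially finite, then it lies in $\ftors\Lambda$ and is trivially the maximum of $\{\TT\in\ftors\Lambda\mid\TT\subseteq\TT_i\text{ for all }i\}$, so it serves as the meet in $\ftors\Lambda$. For the ``only if'' direction: assume a meet $\TT\in\ftors\Lambda$ exists. Then $\TT\subseteq\bigcap\TT_i$. If $\bigcap\TT_i$ is not functorially finite, Proposition \ref{non-functorially finite}(b), applied with the non-functorially finite torsion class $\bigcap\TT_i$ and the functorially finite $\TT\subsetneq\bigcap\TT_i$, produces some $\TT''\in\ftors\Lambda$ with $\TT\subsetneq\TT''\subseteq\bigcap\TT_i$. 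Since $\TT''\subseteq\TT_i$ for every $i$, this contradicts the maximality of $\TT$. Hence $\bigcap\TT_i$ must be functorially finite.

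Part (b) runs on exactly the same lines with the roles reversed: replace $\bigcap\TT_i$ by the candidate join $\mathcal{U}:={}^\perp(\bigcap\TT_i^\perp)$, use Proposition \ref{tors A is complete}(b) to see $\mathcal{U}$ is the least element of $\tors\Lambda$ containing every $\TT_i$, and if a join $\TT\in\ftors\Lambda$ exists but $\mathcal{U}$ is not functorially finite, apply Proposition \ref{non-functorially finite}(a) to the non-functorially finite $\mathcal{U}$ and the functorially finite $\TT\supsetneq\mathcal{U}$ to extract $\TT''\in\ftors\Lambda$ with $\TT\supsetneq\TT''\supseteq\mathcal{U}$, contradicting minimality of $\TT$. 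Alternatively, one can simply dualize (a) through the isomorphism of Proposition \ref{opposite}.

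I do not expect any real obstacle here: all the heavy lifting is already done by Proposition \ref{non-functorially finite}, which guarantees that every non-functorially finite torsion class is sandwiched \emph{arbitrarily closely} between functorially finite ones from both sides. The theorem is essentially the observation that this sandwiching property is exactly what prevents a non-functorially finite meet (resp.\ join) in $\tors\Lambda$ from having any replacement inside $\ftors\Lambda$.
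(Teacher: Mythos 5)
Your proposal is correct and follows essentially the same route as the paper: the ``if'' direction via Proposition \ref{tors A is complete}, and the ``only if'' direction by applying Proposition \ref{non-functorially finite} to the assumed meet (resp.\ join) and the non-functorially finite intersection (resp.\ perpendicular closure) to produce a strictly better candidate, contradicting maximality (resp.\ minimality). The paper likewise proves only (a) and obtains (b) by duality, exactly as you suggest in your closing alternative.
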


\begin{proof}
We only have to prove (a) since (b) is a dual.

If $\bigcap_{i\in I}\TT_i$ is functorially finite, then it is a meet
of $\TT_i$ ($i\in I$) in $\ftors \Lambda$, by Proposition \ref{tors A is complete}.
Thus we only have to prove the `only if' part.

Assume that $\TT_i$ ($i\in I$) has a meet $\SS$ in $\ftors \Lambda$ and
that $\TT:=\bigcap_{i\in I}\TT_i$ is not functorially finite.
Since $\SS\subset\TT_i$ for all $i\in I$, we have $\SS\subset\TT$.
Since $\TT$ is not functorially finite, we have $\SS\subsetneq\TT$. 
Applying Proposition \ref{non-functorially finite}, there exists $\SS'\in\ftors \Lambda$ such that
\[\SS\subsetneq\SS'\subset\TT.\]
Thus $\SS'\subset\TT_i$ holds for any $i\in I$.
This is a contradiction since $\SS$ is a meet of $\TT_i$ ($i\in I$).
\end{proof}

\begin{remark} \label{tors/ftors}
The statements in the above theorem mean that a meet (respectively, join) in $\ftors \Lambda$ has to be the same as a meet (respectively, join) in the complete lattice $\tors \Lambda$.
\end{remark}

\section{Lattice structure of torsion classes for path algebras}

\subsection{Sufficient conditions for $\ftors(kQ)$ to be a lattice}

Let $Q$ be a finite connected acyclic quiver. In this section we give two sufficient conditions for $\ftors(kQ)$ to be a lattice.
Since for an artin algebra of finite representation type any subcategory is functorially finite, the first result is a direct consequence of the fact that $\tors(kQ)$ is a lattice.

\begin{proposition} \label{Dynkin} If $Q$ is a Dynkin diagram, then $\ftors(kQ)$ is a lattice.
\end{proposition}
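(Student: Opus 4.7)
The plan is to reduce the statement directly to results already established in the excerpt, since for a Dynkin quiver $Q$ the path algebra $kQ$ is of finite representation type and, in particular, $\tau$-rigid finite. First I would observe that when $Q$ is Dynkin, there are only finitely many indecomposable $kQ$-modules (by Gabriel's theorem), hence only finitely many indecomposable $\tau$-rigid modules; thus $kQ$ satisfies condition (e) of Theorem \ref{complete lattice}.

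Applying the equivalence (e)$\Leftrightarrow$(d) of Theorem \ref{complete lattice}, every torsion class in $\mod (kQ)$ is functorially finite, so $\ftors(kQ)=\tors(kQ)$. By Proposition \ref{tors A is complete}(a), $\tors(kQ)$ is a complete lattice. Therefore $\ftors(kQ)$ is a complete lattice, and in particular a lattice.

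There is no real obstacle here: the work has already been done in Theorem \ref{complete lattice} and Proposition \ref{tors A is complete}, and the Dynkin hypothesis is used solely to guarantee $\tau$-rigid finiteness (equivalently, finite representation type). One could alternatively cite Proposition \ref{infinite chain}(c) directly for the equality $\ftors(kQ)=\tors(kQ)$, but invoking Theorem \ref{complete lattice} gives the stronger conclusion that $\ftors(kQ)$ is in fact a complete lattice with no additional effort.
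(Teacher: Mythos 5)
Your proof is correct and follows essentially the same route as the paper: both arguments reduce to showing $\ftors(kQ)=\tors(kQ)$ for a representation-finite algebra and then invoke the complete lattice structure of $\tors(kQ)$ from Proposition \ref{tors A is complete}. The only cosmetic difference is that the paper cites the fact that every subcategory of $\mod(kQ)$ is functorially finite in finite representation type, whereas you route through $\tau$-rigid finiteness and Theorem \ref{complete lattice}; both are valid and yield the same (complete lattice) conclusion.
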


When $Q$ is a Dynkin diagram, the lattice $\ftors(kQ)$ was shown in
\cite[Theorem 4.3]{IT} to be a Cambrian lattice in the sense of Reading \cite{Re} .

The second sufficient condition is the following.

\begin{proposition} \label{2vertices} Assume that $Q$ has at most two vertices. Then $\ftors(kQ)$ is a lattice.
\end{proposition}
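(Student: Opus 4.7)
If $Q$ has one vertex then $kQ=k$ and $\ftors(kQ)=\{0,\mod k\}$ is trivially a lattice; if $Q$ has two vertices and a single arrow then $Q$ is Dynkin of type $A_2$ and Proposition~\ref{Dynkin} applies. Thus the essential case is $Q=K_n$, the generalized Kronecker quiver with $n\ge2$ parallel arrows. Since $kK_n$ is hereditary, every basic $\tau$-tilting module is a classical tilting module. Moreover every indecomposable $kK_n$-module is preprojective, regular, or preinjective, and regular indecomposables have nontrivial self-extensions, so the indecomposable $\tau$-rigid modules are exactly the preprojectives $P^{(i)}$ and preinjectives $I^{(i)}$ for $i\ge0$ (indexed so that $P^{(0)}=S_2=P_2$, $P^{(1)}=P_1$, $P^{(i+2)}=\tau^{-1}P^{(i)}$, and dually $I^{(0)}=S_1=I_1$, $I^{(1)}=I_2$, $I^{(i+2)}=\tau I^{(i)}$).

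Since $|kK_n|=2$, the basic support $\tau$-tilting modules have at most two indecomposable summands. I would enumerate them by imposing $\Hom(M,\tau M)=0$: the complete list is $0$; the simples $S_1$ and $S_2$; the preprojective tilting modules $\pi_i:=P^{(i)}\oplus P^{(i+1)}$ (with $\pi_0=kQ$); and the preinjective tilting modules $\iota_j:=I^{(j+1)}\oplus I^{(j)}$ (with $\iota_0=D(kQ)$), for $i,j\ge0$. To determine the resulting partial order, I would use the classical identification $\Fac T=T^\perp=\{M\mid\Ext^1(T,M)=0\}$ for a tilting module $T$ over a hereditary algebra, combined with Euler form computations and the standard vanishings $\Hom(\text{preinjective},\text{preprojective})=0$ and $\Hom(\text{regular},\text{preprojective})=0$. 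This yields: $\Fac\pi_i$ is the torsion class whose indecomposable objects are the regulars, the preinjectives, and the preprojectives $P^{(k)}$ with $k\ge i$; $\Fac\iota_j$ is the extension closure of $\add(I^{(0)}\oplus\cdots\oplus I^{(j+1)})$; and $\Fac S_1=\add S_1$, $\Fac S_2=\add S_2$.

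Consequently the Hasse structure of $\ftors(kQ)$ consists of an infinite chain
\[0<\add S_1<\Fac\iota_0<\Fac\iota_1<\cdots<\Fac\pi_2<\Fac\pi_1<\Fac\pi_0=\mod kQ\]
(in which every $\Fac\iota_j$ lies strictly below every $\Fac\pi_i$, their ``supremum in the $\iota$-part'' and ``infimum in the $\pi$-part'' being non-functorially-finite torsion classes supported in the regular/preinjective components), together with $\add S_2$ as a side-branch satisfying $0<\add S_2<\mod kQ$ and incomparable to every intermediate element. This poset is evidently a lattice: for two elements of the main chain the meet and join are just the smaller and larger; for $\add S_2$ paired with any non-extremal main-chain element, the meet is $0$ and the join is $\mod kQ$. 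All candidate meets and joins therefore lie in $\ftors(kQ)$, so Theorem~\ref{existence of meet} confirms that $\ftors(kK_n)$ is a lattice.

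The principal obstacle in carrying out this plan is the explicit determination of $\Fac\pi_i$ and $\Fac\iota_j$ in the second step, in particular establishing that every $\Fac\iota_j$ is contained in every $\Fac\pi_i$ so that the preprojective and preinjective tiltings merge into one chain rather than forming two incomparable ones. This inclusion reduces via AR duality and $\Fac\pi_i=\pi_i^\perp$ to the vanishing $\Hom(I^{(k)},P^{(l)})=0$ of Hom groups from preinjective to preprojective indecomposables, a standard consequence of the structure of the Auslander--Reiten quiver of $kK_n$.
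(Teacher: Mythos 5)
Your proposal is correct and follows essentially the same route as the paper: enumerate the (support) tilting modules of the generalized Kronecker quiver using the structure of its Auslander--Reiten quiver (no rigid regular indecomposables, tilting modules given by adjacent preprojectives or adjacent preinjectives, plus the two simples as support tilting modules), observe that the resulting poset is an infinite chain from $0$ up to $\mod kQ$ with the single extra element $\add S_2$ attached only at top and bottom, and verify the lattice axioms directly. Your extra detail on computing $\Fac\pi_i=\pi_i^{\perp}$ and the inclusion $\Fac\iota_j\subset\Fac\pi_i$ via $\Hom(\text{preinjective},\text{preprojective})=0$ just makes explicit what the paper asserts from the AR-quiver picture.
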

\begin{proof} If $Q$ has one vertex, then $kQ\cong k$, hence the claim is obvious.
Assume then that we have two vertices. Then our quiver $Q$ is 
$1 \xrightarrow{(n)} 2$, with $n\geq 2$ arrows. We can assume $n\geq 2$ since otherwise $Q$ is Dynkin. The Auslander-Reiten quiver is then of the form:
\[\xymatrixrowsep{2pt}\xymatrixcolsep{5pt}
\xymatrix{
&&A_2\ar[ddrr]_{(n)}&&&&A_4&&&&&&&&&&&&&B_3\ar[rrdd]_{(n)}&&&&B_1&&&&\\
&&&&&&&&\dots&&&& \RR  &&\dots&&&&&&&&&&&&&&&&&&&&&&&&&&&&&&\\
A_1\ar[uurr]^{(n)}&&&&A_3\ar[uurr]^{(n)}&&&&&&&\ar@{-}[uu] & &\ar@{-}[uu] &&&&B_4\ar[uurr]^{(n)}&&&&B_2\ar[uurr]^{(n)}&&&
}\]
Here $\RR$ consists of tubes when $n=2$, and of $\mathbb Z A_{\infty}$-components when $n>2$. It is known that no indecomposable rigid module lies in $\RR$.
The tilting modules are given by two consecutive vertices in the preprojective or preinjective component.
So for $i\geq 2$ we have the tilting module $A_{i-1}\oplus A_i$, with associated torsion class $\TT_i=\Fac(A_{i-1}\oplus A_i)$ which is equal to $\Fac A_{i-1}$ when $i\geq 3$.
For $i\geq 2$ we have the tilting modules $B_i\oplus B_{i-1}$ with associated torsion class $\TT_i'=\Fac(B_i\oplus B_{i-1})=\Fac B_i$.
There are no other tilting modules. The additional support tilting modules are the simple modules $A_1$ and $B_1$, and hence we have the additional torsion classes $\TT_1=\Fac A_1$ and $\TT_1'=\Fac B_1$.

We have the inclusions $\{0\}\subset\TT_1\subset \TT_2 \supset \TT_3 \supset \dots \supset  \TT_i \supset \dots \supset \TT_j' \supset \dots\supset \TT_1'$
for all elements of $\ftors (kQ)$. It is clear that if neither $\TT$ nor $\TT'$ is $\TT_1$, then $\TT\vee \TT'$  is the larger one and $\TT\wedge \TT'$ is the smaller one.
Further, $\TT_1\vee \TT=\TT_2 (= \mod kQ)$  for  $\TT \neq \TT_1$, and  $\TT_1\wedge \TT_2=\TT_1$, $\TT_1\wedge \TT=\{0\}$ for $\TT\neq \TT_2$.
 \end{proof}

\subsection{Tame algebras}
In this section we deal with path algebras $kQ$ of extended Dynkin quivers with at least 3 vertices, and show that in that case the $\ftors (kQ)$ do not form lattices.

\begin{proposition}\label{tame}
Let $Q$ be an acyclic extended Dynkin quiver with at least 3 vertices. Then $\ftors(kQ)$ is neither a join-semilattice nor a meet-semilattice.
\end{proposition}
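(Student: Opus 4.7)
The plan is to apply Theorem \ref{existence of meet}(a) by exhibiting two functorially finite torsion classes $\TT_1,\TT_2\in\ftors(kQ)$ such that the torsion class $\TT_1\cap\TT_2$ is not functorially finite; the pair then has no meet in $\ftors(kQ)$, so $\ftors(kQ)$ is not a meet-semilattice. The failure of the join-semilattice property then follows by running the same argument on the opposite quiver $Q^{\op}$, which is again acyclic extended Dynkin with $|Q_0|\ge 3$, together with Proposition \ref{opposite}(b).

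For the construction I would use the classical structure theorem for tame hereditary algebras: $\mod(kQ)$ decomposes (as an additive category) into a preprojective component $\mathcal P$, a regular part $\mathcal R$ which is an infinite $\mathbf P^{1}(k)$-family of tubes, and a preinjective component $\mathcal I$, with $\Hom(\mathcal R,\mathcal P)=\Hom(\mathcal I,\mathcal R)=\Hom(\mathcal I,\mathcal P)=0$. From this, the full subcategory $\FF:=\add(\mathcal R\cup\mathcal I)$ is a torsion class: closure under quotients follows from $\Hom(\mathcal R\cup\mathcal I,\mathcal P)=0$, and closure under extensions is seen by observing that any preprojective summand of the middle term of a short exact sequence with outer terms in $\FF$ would have to appear as a summand of the right term, which is impossible. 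Moreover $\FF$ is not functorially finite: any candidate generator $M$ has only finitely many indecomposable summands and hence meets only finitely many of the infinitely many tubes in $\mathcal R$, so quasi-simples from any unmet tube lie in $\FF$ but cannot be factored through $M$.

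The heart of the proof is to realize $\FF$ (or a comparable non-functorially-finite torsion class) as $\TT_1\cap\TT_2$ for two functorially finite torsion classes. Natural candidates are factor closures $\TT_i=\Fac T_i$ of two tilting modules $T_1,T_2$ which each contain all of $\FF$ in their factor closure but whose preprojective parts are disjoint. Because $|Q_0|\ge 3$ the preprojective component has at least three $\tau$-orbits, leaving room for two preprojective tilting modules whose factor closures cover $\mathcal R\cup\mathcal I$ yet share no indecomposable in $\mathcal P$, so that $\Fac T_1\cap\Fac T_2=\FF$. For $|Q_0|=2$ (the generalized Kronecker quivers) the preprojective component is too narrow for such a construction, consistently with Proposition \ref{2vertices}.

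The main obstacle is exactly this last step: the explicit production of $T_1,T_2$ and the verification that $\Fac T_1\cap\Fac T_2=\FF$. This requires detailed bookkeeping with preprojective tilting modules over tame hereditary algebras and is where the hypothesis $|Q_0|\ge 3$ enters essentially. A reasonable execution is a case analysis starting with the three-vertex case $\widetilde A_{2}$ and handling $\widetilde D_n$, $\widetilde E_6$, $\widetilde E_7$, $\widetilde E_8$ using classical constructions of tilting modules via BGP reflections and the combinatorics of the Coxeter transformation on the preprojective component, invoking Proposition \ref{interval} to descend from larger extended Dynkin types whenever a suitable idempotent quotient reveals a smaller non-lattice $\ftors$.
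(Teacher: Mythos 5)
There is a genuine gap, and it sits exactly where you flag it: the entire content of the proposition is the production of functorially finite torsion classes whose meet (or join) in $\tors(kQ)$ fails to be functorially finite, and your proposal leaves that step as an announced ``case analysis'' that is never carried out. Worse, the specific route you sketch cannot be made to work. For a preprojective tilting module $T$ one has $\Fac T=\{X\mid \Ext^1_{kQ}(T,X)=0\}=\{X\mid \Hom_{kQ}(X,\tau T)=0\}$, and since $\Hom_{kQ}(X,Y)\neq 0$ between indecomposables in the preprojective component forces $X$ to be a predecessor of $Y$, the class $\Fac T$ contains \emph{all but finitely many} indecomposable preprojectives (besides all regular and preinjective modules). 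Hence $\Fac T_1\cap\Fac T_2$ for any two preprojective tilting modules again contains cofinitely many preprojective indecomposables and can never equal $\add(\mathcal R\cup\mathcal I)$; no choice of ``disjoint preprojective parts'' fixes this. (Your argument that $\add(\mathcal R\cup\mathcal I)$ is itself a non--functorially finite torsion class is fine, but it is not attained as such an intersection.) Note also that your proposal never uses the hypothesis $|Q_0|\ge 3$ where it actually matters.

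The paper's proof is much more economical and localizes to a single tube. Since $Q$ is extended Dynkin with at least $3$ vertices, there is a tube $\CC$ of rank $r\ge 2$, and the $r$ quasi-simple modules $S_1,\dots,S_r$ at its mouth are $\tau$-rigid (this is where $|Q_0|\ge 3$ enters: a rank-one tube has no rigid quasi-simples), so each $\Fac S_i$ lies in $\ftors(kQ)$. The candidate join in $\tors(kQ)$, namely ${}^\perp\bigl(\bigcap_i(\Fac S_i)^\perp\bigr)$, is computed to be $\add(\CC\cup\{\text{preinjectives}\})$, which is not of the form $\Fac Y$ for any module $Y$ because the Loewy lengths of objects in the tube are unbounded; Theorem \ref{existence of meet}(b) then shows the join does not exist, and the meet statement follows by passing to $Q^{\op}$ via Proposition \ref{opposite}. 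If you want to rescue your plan, you should replace the preprojective tilting modules by these regular quasi-simples and attack the join rather than the meet (or dualize), and you must supply the unboundedness-of-Loewy-length argument showing the resulting torsion class is not functorially finite.
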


\begin{proof} Since $kQ$ is extended Dynkin with at least 3 vertices, there is a tube $\CC$ of rank $r\geq2$ and there are $r$ quasi-simple modules $S_1,  \dots , S_r$ in $\CC$. Since  $S_1,  \dots , S_r$   are $\tau$-rigid, we have that  
$\TT_1=\Fac S_1, \dots , \TT_r=\Fac S_r$ are in $\ftors (kQ)$. By Theorem \ref{existence of meet} there is a  join of these $\TT_i$ in $\ftors (kQ)$ if and only if ${}^\perp(\bigcap_{i\in I}\TT_i^\perp)$ is functorially finite, where $I=\{1,\dots, n\}$. However ${}^\perp(\bigcap_{i\in I}\TT_i^\perp)= \add(\CC\cup \{$preinjectives$\})$ which is not functorially finite, since it clearly cannot be written as $\Fac Y$ for any $Y$. Therefore there is no join in $\ftors(kQ)$, and hence $\ftors(kQ)$ is not a join-semilattice.

Since $Q^{\rm op}$ is an acyclic extended Dynkin quiver with at least 3 vertices, $\ftors(kQ^{\rm op})$ is not a join-semilattice.
By Proposition \ref{opposite}, $\ftors(kQ)$ is not a meet-semilattice.
\end{proof}

\subsection{Wild algebras}

In this section we show that $\ftors(kQ)$ is not a lattice when the quiver $Q$ is connected wild, with 3 vertices.

For a finite dimensional algebra $\Lambda$ and a set $S$ of $\Lambda$-modules,
we denote by $\Filt S$ the full subcategory of $\mod\Lambda$ whose objects are
the $\Lambda$-modules which have a finite filtration with factors in $S$.

\begin{proposition}\label{R}
Let $Q$ be an acyclic quiver, and let $M$ and $N$ be indecomposable rigid $kQ$-modules such that $\Hom_{kQ}(M,N)=0=\Hom_{kQ}(N,M)$, $\Ext_{kQ}^1(M,N)\neq0$ and $\Ext_{kQ}^1(N,M)\neq0$.
\begin{itemize}
\item[(a)] \cite{R} The category $\AA:=\Filt(M,N)$ is an exact abelian subcategory of $\mod kQ$ with two simple objects $M$ and $N$.
\item[(b)] $\End_{kQ}(M)\cong k\cong \End_{kQ}(N)$ holds, and $M$ and $N$ are regular.
\item[(c)] For any $\ell\ge0$, there exists an object $X_\ell$ in $\AA$ which is uniserial of length $\ell$ in $\AA$.
\end{itemize}
\end{proposition}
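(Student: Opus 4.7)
The plan is to deduce (b) formally from (a) and to prove (c) by induction on $\ell$.

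For (b), since (a) asserts that $M$ and $N$ are simple in the $k$-linear abelian category $\AA$, their endomorphism rings are division algebras; as they are computed in $\mod kQ$ and are finite-dimensional over an algebraically closed field, they must equal $k$. To show that $M$ and $N$ are regular I would argue by contradiction using the Auslander--Reiten formula $\Ext^1_{kQ}(A,B)\cong D\Hom_{kQ}(B,\tau A)$. If $M$ were preprojective, then $\Ext^1(M,N)\neq 0$ forces $M$ to be non-projective, so $\tau M$ is a nonzero preprojective module; since $\Hom$ from regular or preinjective modules to preprojectives vanishes, $\Hom(N,\tau M)\neq 0$ forces $N$ to be preprojective as well. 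A symmetric application of the AR formula to $\Ext^1(N,M)\neq 0$ yields $\Hom(M,\tau N)\neq 0$, and writing $M=\tau^{-a}P_i$, $N=\tau^{-b}P_j$ with $a,b\geq 1$, the directionality of morphisms in the preprojective component gives the incompatible inequalities $b\leq a-1$ and $a\leq b-1$. A dual argument using $\Ext^1(A,B)\cong D\Hom(\tau^{-1}B,A)$ rules out $M,N$ being preinjective.

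For (c), I would build $X_\ell$ inductively, taking $X_0=0$, $X_1=M$, and constructing $X_{\ell+1}$ as a carefully chosen non-split extension
\[
0\to X_\ell\to X_{\ell+1}\to T\to 0,
\]
where $T$ is the simple of $\AA$ opposite to $\top X_\ell$. Two preliminary facts are needed. First, $\AA$ is hereditary: being extension-closed in the hereditary category $\mod kQ$, a standard Yoneda pullback argument gives $\Ext^2_\AA=0$. Second, $\Ext^1_\AA(T,X_\ell)\neq 0$: applying $\Hom_\AA(T,-)$ to $0\to\rad X_\ell\to X_\ell\to\top X_\ell\to 0$ and using hereditariness of $\AA$ yields a surjection $\Ext^1_\AA(T,X_\ell)\twoheadrightarrow\Ext^1_\AA(T,\top X_\ell)$, whose target is nonzero because $T\neq\top X_\ell$.

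The main obstacle is ensuring uniseriality of $X_{\ell+1}$, as a generic non-split extension need not be uniserial. The defect is controlled by the pushout sequences $0\to X_\ell/U_i\to X_{\ell+1}/U_i\to T\to 0$ for $i<\ell$, where $U_i$ is the $i$-th term in the composition chain of $X_\ell$: the module $X_{\ell+1}$ is uniserial if and only if each of these pushouts is non-split, which happens precisely when $[X_{\ell+1}]$ avoids the image of $\Ext^1_\AA(T,U_i)\to\Ext^1_\AA(T,X_\ell)$. Each of these images is a proper subspace, since by hereditariness the cokernel equals $\Ext^1_\AA(T,X_\ell/U_i)$, which is nonzero by applying the step above to the uniserial quotient $X_\ell/U_i$ (whose top is still $\top X_\ell\neq T$). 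Since $k$ is algebraically closed and hence infinite, a finite union of proper subspaces cannot cover $\Ext^1_\AA(T,X_\ell)$, so a suitable extension class exists, completing the induction.
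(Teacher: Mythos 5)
Your proof is correct, and it reaches the same conclusions as the paper by a partly different route. For part (b), you deduce $\End_{kQ}(M)\cong k$ from (a) via Schur's lemma, while the paper gets it directly from rigidity (the Happel--Ringel lemma: an indecomposable rigid module over a hereditary algebra is a brick); your deduction is fine if (a) is taken as a black box, though the paper's order is logically safer since Ringel's theorem behind (a) is usually stated for pairs of orthogonal bricks. Your regularity argument, with explicit $\tau$-degree bookkeeping in the preprojective component, is a spelled-out version of the paper's one-line observation that the two non-split extensions put $M$ and $N$ on a cycle of non-isomorphisms, which cannot occur in the directed preprojective or preinjective components. The genuine divergence is in (c): the paper fixes one non-split extension $0\to M\to E\to N\to 0$, lifts it along the surjection $X_\ell\to\top_{\AA}X_\ell$ using only that $kQ$ is hereditary, and verifies $\rad_{\AA}Y=X_\ell$ directly (otherwise $Y/\rad_{\AA}Y$ would equal the non-semisimple object $E$); you instead pick a generic class in $\Ext^1_{\AA}(T,X_\ell)$ avoiding a finite union of proper subspaces. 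This works, granted the identification $\Ext^1_{\AA}=\Ext^1_{kQ}$ on $\AA$ (which holds because $\AA$ is extension-closed and fully exact, and which is what your appeal to ``hereditariness of $\AA$'' really amounts to), but it is heavier than necessary: uniseriality of $X_{\ell+1}$ is controlled by the single pushout along $X_\ell\to X_\ell/\rad_{\AA}X_\ell$, so only the one subspace corresponding to $U_{\ell-1}=\rad_{\AA}X_\ell$ needs to be avoided, a proper subspace of a nonzero space never being everything; the infinitude of $k$ is not needed, and the paper's explicit lift is precisely a class avoiding that one subspace.
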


\begin{proof}
(a) This is shown in \cite[Theorem 1.2]{R}.

(b) Since $M$ and $N$ are rigid, we have the first assertion.
Since $\Ext_{kQ}^1(M,N)\neq0$ and $\Ext_{kQ}^1(N,M)\neq0$ hold,
$M$ and $N$ are in a cycle. Hence they are regular.

(c) The assertion is clear for $\ell=1$.
Assume that we have a uniserial object $X_\ell$ of length $\ell$ in $\AA$.
Without loss of generality, let $M$ be the top of $X_\ell$ in $\AA$.
Then there exists an exact sequence $0\to \rad_{\AA}X_\ell\to X_\ell\to M\to0$.
Since $\Ext^1_{kQ}(N,M)\neq0$, there exists a non-split exact sequence
$0\to M\to E\to N\to 0$. Since $kQ$ is hereditary, we have a commutative diagram
of exact sequences:
\begin{equation*}
\xymatrixrowsep{7pt}\xymatrixcolsep{20pt}
\xymatrix{
&0&0\\
0\ar[r]&M\ar[r]\ar[u]&E\ar[r]\ar[u]&N\ar[r]&0\\
0\ar[r]&X_\ell\ar[r]\ar[u]&Y\ar[r]\ar[u]&N\ar[r]\ar@{=}[u]&0\\
&\rad_{\AA}X_\ell\ar@{=}[r]\ar[u]&\rad_{\AA}X_\ell\ar[u]\\
&0\ar[u]&0.\ar[u]
}\end{equation*}
Clearly $Y$ belongs to the category $\AA$.
We show that $Y$ is uniserial of length $\ell+1$ in $\AA$.
It is enough to show $\rad_{\AA}Y=X_\ell$. Otherwise $\rad_{\AA}Y$ is strictly contained
in $X_\ell$, and hence $\rad_{\AA}Y=\rad_{\AA}X_\ell$ holds since $X_\ell$ is uniserial.
Then $Y/\rad_{\AA}Y=E$ holds, a contradiction since $E$
is not semisimple in the category $\AA$. Thus the assertion follows.
\end{proof}

We shall also need the following.

\begin{lemma} \label{quotient}Let $\mathcal C$ be a full subcategory of $\mod kQ$ closed under extensions. Then $\Fac \mathcal C$ is also closed under extensions.
\end{lemma}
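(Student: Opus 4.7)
The plan is to use the standard pullback construction together with the fact that $kQ$ is hereditary.

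Take a short exact sequence $0 \to A \to B \to C \to 0$ with $A, C \in \Fac\mathcal C$. By definition, there exist $X, Y \in \mathcal C$ together with epimorphisms $p \colon X \twoheadrightarrow A$ and $q \colon Y \twoheadrightarrow C$. First I would form the pullback $P$ of the maps $B \to C$ and $q \colon Y \to C$, producing a commutative diagram whose rows are short exact sequences
\[
\xymatrix{
0 \ar[r] & A \ar[r] \ar@{=}[d] & P \ar[r] \ar[d] & Y \ar[r] \ar[d]^{q} & 0 \\
0 \ar[r] & A \ar[r] & B \ar[r] & C \ar[r] & 0,
}
\]
with the middle vertical map surjective because $q$ is surjective.

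Next I would use that $kQ$ is hereditary, so $\Ext_{kQ}^{2}(Y, \Ker p) = 0$; hence the induced map $\Ext_{kQ}^{1}(Y, X) \to \Ext_{kQ}^{1}(Y, A)$ is surjective. Consequently the top row $\eta \colon 0 \to A \to P \to Y \to 0$ lifts to an extension $0 \to X \to Z \to Y \to 0$ together with a morphism of short exact sequences
\[
\xymatrix{
0 \ar[r] & X \ar[r] \ar[d]^{p} & Z \ar[r] \ar[d] & Y \ar[r] \ar@{=}[d] & 0 \\
0 \ar[r] & A \ar[r] & P \ar[r] & Y \ar[r] & 0.
}
\]
Because $p$ is surjective, so is the middle vertical map $Z \to P$ (apply the snake lemma, or just compute cokernels).

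Finally, since $X, Y \in \mathcal C$ and $\mathcal C$ is closed under extensions, we have $Z \in \mathcal C$. Composing the surjections $Z \twoheadrightarrow P \twoheadrightarrow B$ exhibits $B$ as a factor of an object of $\mathcal C$, so $B \in \Fac\mathcal C$, proving closure under extensions. The only nontrivial step is the hereditary lifting in the second paragraph; everything else is a straightforward diagram chase, so I do not anticipate a serious obstacle.
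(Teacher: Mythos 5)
Your argument is correct and is essentially identical to the paper's proof: both first pull back along the surjection onto the quotient, then use vanishing of $\Ext^2$ over the hereditary algebra to lift the resulting extension along the surjection onto the sub, and conclude by extension-closure of $\mathcal C$. No issues.
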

\begin{proof} Let $0 \rightarrow X \rightarrow Y \rightarrow Z \rightarrow 0$ be an exact sequence in $\mod kQ$, where $X$ and $Z$ are in $\Fac \mathcal C$. Then we have surjections $f: C_0\rightarrow X$ and  $g: C_1\rightarrow Z$, where $C_0$ and $C_1$ are in $\mathcal C$. This gives rise to the exact sequence:
\begin{eqnarray*}
\Ext_{kQ}^1(C_1,C_0)\rightarrow  \Ext_{kQ}^1(C_1,X)\rightarrow \Ext_{kQ}^2(C_1,\Ker f)=0.
\end{eqnarray*}
Thus we get the exact commutative diagrams:
$$\xymatrixrowsep{7pt}\xymatrixcolsep{20pt}
\xymatrix{0\ar[r]&X\ar[r]\ar@{=}[d]&Y'\ar[r]\ar[d]&C_1\ar[r]\ar[d]&0\\
0\ar[r]&X\ar[r]&Y\ar[r]&Z\ar[r]&0
}\qquad\qquad
\xymatrixrowsep{7pt}\xymatrixcolsep{20pt}
\xymatrix{0\ar[r]&C_0\ar[r]\ar[d]&Y''\ar[r]\ar[d]&C_1\ar[r]\ar@{=}[d]&0\\
0\ar[r]&X\ar[r]&Y'\ar[r]&C_1\ar[r]&0
}$$

\noindent Since $\mathcal C$ is extension closed, then $Y''$ is in $\mathcal C$, and we have surjections $Y''\rightarrow Y'\rightarrow Y$, so that $Y$ is in $\Fac \mathcal C$, as desired.
\end{proof}

Combining the above results, we get the following.

\begin{proposition}\label{wild}
Let $kQ$ be an acyclic quiver, and $M$ and $N$ be $kQ$-modules satisfying the assumptions in Proposition \ref{R}.
Let $\AA:=\Filt(M,N)$ and $\TT:= \Fac\AA$. Then:
\begin{itemize}
\item[(a)] The subcategory $\TT$ is a torsion class which is not functorially finite in $\mod (kQ)$.
\item[(b)] $\ftors (kQ)$ is neither a join-semilattice nor a meet-semilattice.
\end{itemize}
\end{proposition}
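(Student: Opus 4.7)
My plan is to establish (a) first, then deduce both halves of (b) from it: the join direction directly, the meet direction by passage to the opposite algebra.

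For (a), verifying that $\TT = \Fac\AA$ is a torsion class is routine: closure under quotients is built into the definition of $\Fac$, while closure under extensions is exactly Lemma \ref{quotient} applied to the extension-closed subcategory $\AA$ of $\mod kQ$ supplied by Proposition \ref{R}(a). The real work is showing that $\TT$ is not functorially finite. I would argue by contradiction: assume $\TT = \Fac T$ for some $T \in \mod kQ$, which by Proposition \ref{basic bijection} we may take to be basic support $\tau$-tilting, so $T$ has only finitely many indecomposable summands, all of them rigid in $\mod kQ$. Since $M,N \in \TT$ while $\Ext^1_{kQ}(M,N) \neq 0 \neq \Ext^1_{kQ}(N,M)$, neither $M$ nor $N$ is $\Ext$-projective in $\TT$, and hence neither is a summand of $T$. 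I would then use the tower of uniserial objects $X_\ell \in \AA \subset \TT$ of unbounded length from Proposition \ref{R}(c) to obstruct the existence of $T$: a single finite rigid module $T$ cannot simultaneously $\Fac$-generate every $X_\ell$ and remain $\tau$-rigid, because the unbounded iteration of the nonvanishing Ext-groups inside $\AA$ would have to be witnessed by extensions involving summands of $T^{k}$. Pinning down this obstruction precisely is where I expect the main work to lie.

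For (b), the join direction is immediate from (a). Since $M$ and $N$ are rigid and hence $\tau$-rigid, both $\Fac M$ and $\Fac N$ lie in $\ftors(kQ)$. By Proposition \ref{tors A is complete}, their join in the complete lattice $\tors(kQ)$ is the smallest torsion class containing $M$ and $N$; since $\AA = \Filt(M,N)$ is extension-closed and $\Fac\AA$ is thus already a torsion class, this smallest torsion class is exactly $\Fac\AA = \TT$. Because $\TT \notin \ftors(kQ)$ by (a), Theorem \ref{existence of meet}(b) (see also Remark \ref{tors/ftors}) shows that $\Fac M$ and $\Fac N$ have no join in $\ftors(kQ)$, so $\ftors(kQ)$ is not a join-semilattice.

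For the meet direction, I pass to the opposite algebra. The duality $D = \Hom_k(-,k) : \mod kQ \to \mod kQ^{\op}$ sends the rigid pair $(M,N)$ to a rigid pair of $kQ^{\op}$-modules still satisfying the Hom-vanishing and the two Ext-nonvanishing conditions (the two Ext groups simply swap direction), so the hypotheses of Proposition \ref{R} hold over $kQ^{\op}$. Applying the join assertion already established to $Q^{\op}$ shows that $\ftors(kQ^{\op})$ is not a join-semilattice, and then Proposition \ref{opposite}(b) yields that $\ftors(kQ)$ is not a meet-semilattice.
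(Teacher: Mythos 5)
Your treatment of part (b) and of the ``torsion class'' half of part (a) is correct and coincides with the paper's: Lemma \ref{quotient} gives extension-closure of $\Fac\AA$, the join of $\Fac M$ and $\Fac N$ in $\tors(kQ)$ is identified with $\TT$, Theorem \ref{existence of meet} (or Remark \ref{tors/ftors}) rules out a join in $\ftors(kQ)$, and the meet statement follows by dualizing via Proposition \ref{opposite} after checking that $DM$ and $DN$ again satisfy the hypotheses of Proposition \ref{R}.

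The problem is the core of part (a): you do not actually prove that $\TT$ is not functorially finite. You reduce to $\TT=\Fac T$ with $T$ support $\tau$-tilting, observe that $M$ and $N$ are not summands of $T$, and then assert that the unbounded tower $X_\ell$ ``cannot be $\Fac$-generated by a single rigid $T$,'' explicitly deferring the justification (``pinning down this obstruction precisely is where I expect the main work to lie''). That deferred step \emph{is} the proposition; the heuristic about $\tau$-rigidity of $T$ and ``iterated Ext-groups'' does not obviously close, and the Ext-projectivity remark is a red herring. The paper's argument closes the gap by a different and more elementary reduction: if $\TT=\Fac X$, then since $X\in\Fac\AA$ there is an epimorphism $C\to X$ with $C\in\AA$, so one may assume $\TT=\Fac C$ with $C$ \emph{inside} the abelian category $\AA$. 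Now let $\ell$ be the Loewy length of $C$ in $\AA$. The uniserial object $X_{\ell+1}$ of Proposition \ref{R}(c) lies in $\TT=\Fac C$, so there is an epimorphism $C^m\to X_{\ell+1}$, which (because $\AA$ is an exact abelian subcategory) is an epimorphism in $\AA$; but a quotient of $C^m$ in $\AA$ has Loewy length at most $\ell$, a contradiction. The decisive point you are missing is this passage from an arbitrary generator $X$ of $\TT$ to a generator $C$ lying in $\AA$, which converts the problem into a bounded-versus-unbounded Loewy length comparison inside the length category $\AA$; without some such device your obstruction remains unproven.
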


\begin{proof}
(a) It follows from Lemma \ref{quotient} that $\TT$ is a torsion class.
Let $\TT_1:=\Fac M$ and $\TT_2:=\Fac N$. Since $M$ and $N$ are rigid, the subcategories  $\TT_1$ and  $\TT_2$ are in $\ftors (kQ)$. 

Assume that $\TT$ is functorially finite. Then there exists a module $X$ in $\TT$ so that $\TT=\Fac X$.
By the definition of $\TT$, there is a module $C$ in $\AA$ and an epimorphism $C\rightarrow X$ in $\mod kQ$, and hence $\TT=\Fac C$.
Now let $\ell$ be the Loewy length of $C$ in $\AA$.
Since the modules $M$ and $N$ satisfy the conditions of Proposition \ref{R},
there is a uniserial object $X_{\ell+1}$ of length $\ell+1$ in $\AA$.
Since $X_{\ell+1}\in\Fac C$, there is an epimorphism $C^m\to X_{\ell+1}$ in
$\mod kQ$ (and hence in $\AA$) for some $m\ge0$.
This is a contradiction since the Loewy length of $X_{\ell+1}$ is bigger than that
of $C$.

(b) If $\ftors(kQ)$ is a lattice, we know from section 1 that the join of $\Fac M$ and $\Fac N$ must be the smallest torsion class containing $\Fac M$ and $\Fac N$, which is clearly $\TT$. But since we have seen that this is not a functorially finite subcategory of $\mod kQ$ by (a), it follows that $\ftors(kQ)$ is not a join-semilattice. 

Since the $kQ^{\rm op}$-modules $DM$ and $DN$ satisfy the conditions of Proposition \ref{R}, we have that $\ftors(kQ^{\rm op})$ is not a join-semilattice.
By Proposition \ref{opposite}, $\ftors(kQ)$ is not a meet-semilattice.
\end{proof}

Now we are able to show the following result, where $\xymatrix{1\ar[r]^{(a)}&2}$ denotes $a$ multiple arrows from $1$ to $2$.

\begin{lemma}\label{1/6}
Let $Q=\xymatrix{1\ar@<.5mm>[r]^{(a)}\ar@/_1pc/[rr]_{(c)}&2\ar@<.5mm>[r]^{(b)}&3}$ be a quiver with $a\ge 2$, $b\ge1$ and $c\ge0$. Then there exist $M$ and $N$ satisfying the conditions in Proposition \ref{R}.
\end{lemma}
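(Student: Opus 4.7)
The plan is to construct explicit indecomposable rigid $kQ$-modules $M$ and $N$ satisfying the four conditions of Proposition~\ref{R}. I would split the argument into two cases depending on whether $c\ge 1$ or $c=0$.

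In the case $c\ge 1$, I would take $M:=S_2$, the simple module at vertex $2$ (which is rigid since $Q$ is acyclic hence has no loops), and let $N$ be the unique indecomposable rigid $kQ$-module with dimension vector $(1,0,c)$. That $(1,0,c)$ is a real Schur root follows from a direct Euler-form calculation ($\langle(1,0,c),(1,0,c)\rangle=1+c^2-c\cdot c=1$), so the corresponding $N$ exists by Kac's theorem; it is supported on $\{1,3\}$ with $N_1=1$ and $N_3=c$. Since $N_2=0$, the disjoint supports at vertex $2$ give $\Hom(M,N)=0=\Hom(N,M)$ automatically. Applying $\Hom(-,N)$ to the projective resolution $0\to P_3^b\to P_2\to S_2\to 0$ yields $\Ext^1(S_2,N)\cong\Cok(N_2\to N_3^b)=N_3^b\neq 0$, and applying $\Hom(N,-)$ to the injective coresolution $0\to S_2\to I_2\to I_1^a\to 0$ yields $\Ext^1(N,S_2)\cong D\Ker(N_1^a\to N_2)=D(N_1^a)\neq 0$.

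In the case $c=0$, $Q$ is the quiver $1\xrightarrow{(a)}2\xrightarrow{(b)}3$ with $a\ge 2,\,b\ge 1$, and its Tits form is indefinite so $kQ$ is wild. Here the disjoint-support trick is unavailable. Instead I would search for a pair of real Schur roots $d_M,d_N$ of $kQ$ with both Euler pairings $\langle d_M,d_N\rangle<0$ and $\langle d_N,d_M\rangle<0$; the corresponding rigid indecomposables $M,N$ then exist uniquely by Kac's theorem. Regular real Schur roots of $kQ$ can be produced by iterating the Coxeter transformation starting from suitable seeds (for instance $(2,3,0)$ when $a=2,\,b=1$, whose $\Phi^{\pm 1}$-orbit grows in both directions and therefore reaches neither a projective nor an injective). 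The required $\Hom$-vanishing is then deduced from the Auslander--Reiten formula $\Hom(X,\tau Y)\cong D\Ext^1(Y,X)$ combined with the rigidity of $M,N$ at appropriate positions in their $\tau$-orbits; together with the Euler-form negativity this forces both $\Ext^1$ groups to be non-zero.

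The main obstacle is the case $c=0$: although wildness of $kQ$ abstractly guarantees the existence of such a pair via Kac's theorem, producing an explicit pair requires identifying regular real Schur roots outside the preprojective and preinjective components, which in turn demands a calculation with the Coxeter transformation on the root lattice and a careful verification of the $\Hom$-vanishing using the Auslander--Reiten formula.
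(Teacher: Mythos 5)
Your case $c\ge 1$ is essentially correct and is a nice shortcut: taking $M=S_2$ and $N$ the indecomposable supported on $\{1,3\}$ with dimension vector $(1,0,c)$, the disjoint supports at vertex $2$ kill both Hom spaces, and the resolutions $0\to P_3^b\to P_2\to S_2\to 0$ and $0\to S_2\to I_2\to I_1^a\to 0$ give the two non-vanishing $\Ext^1$ groups. (One small repair: the computation $\langle(1,0,c),(1,0,c)\rangle=1$ does not by itself make $(1,0,c)$ a Schur root; you should instead observe that your $N$ is the indecomposable projective at vertex $1$ of the full subquiver on $\{1,3\}$, hence rigid over that subquiver and therefore rigid over $kQ$ because $\mod kQ/\langle e_2\rangle$ is extension-closed in $\mod kQ$.)

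The case $c=0$ is a genuine gap, not a proof. Wildness plus Kac's theorem does not "abstractly guarantee" a pair of rigid indecomposables with mutual Hom-vanishing and mutual $\Ext^1$-non-vanishing; that is precisely what has to be established. Moreover, even if you found two real Schur roots $d_M,d_N$ with $\langle d_M,d_N\rangle<0$ and $\langle d_N,d_M\rangle<0$, this only forces $\Ext^1\neq 0$ in both directions (since $\langle d_M,d_N\rangle=\dim\Hom(M,N)-\dim\Ext^1(M,N)$); it does not give $\Hom(M,N)=0=\Hom(N,M)$, which must be argued separately and is the delicate point. Your candidate seed $(2,3,0)$ is not checked to be a Schur root, let alone regular, and no Hom-vanishing is verified. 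The paper closes this uniformly for all $c\ge 0$ by taking $M$ to be the projective at vertex $1$ of the full subquiver $1\xrightarrow{(a)}2$ (so $\dimvec M=(1,a,0)$) and $N:=\tau M$. Then $\Hom(M,N)=\Hom(M,\tau M)=0$ and $\Ext^1(M,N)\cong D\underline{\End}(M)\neq 0$ are free from rigidity and Auslander--Reiten duality; $\Hom(N,M)=0$ is proved by showing $\soc\tau^{-1}M$ consists only of copies of $S_3$ while $M$ has no composition factor $S_3$; and $\Ext^1(N,M)\neq 0$ follows from an explicit Euler-form computation showing $\langle N,M\rangle<0$. You could keep your construction for $c\ge 1$ and adopt this $\tau$-translate construction for $c=0$, but as written the lemma is only proved for $c\ge 1$.
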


\begin{proof}
Let $Q':=(1 \xrightarrow {(a)}2)$ be a full subquiver of $Q$.
We regard the projective $kQ'$-module corresponding to the
vertex $1$ as a $kQ$-module $M$, and let $N:=\tau_{kQ}M$.
We show that $M$ and $N$ satisfy the conditions in Proposition \ref{R} with $p>0$ and $q>0$.
We have $\dimvec M=(1,a,0)^t$. Since the Cartan matrix of $kQ$ (see \cite{ASS}) is
$C=\left[\begin{smallmatrix}
1&0&0\\ a&1&0\\ ab+c&b&1
\end{smallmatrix}\right]$ 
and the Coxeter matrix of $kQ$ (see \cite{ASS}) is given by
\[\Phi=-C^t\cdot C^{-1}=-\left[\begin{smallmatrix}
1&a&ab+c\\ 0&1&b\\ 0&0&1
\end{smallmatrix}\right]
\left[\begin{smallmatrix}                                                         
1&0&0\\ -a&1&0\\ -c&-b&1
\end{smallmatrix}\right]
=\left[\begin{smallmatrix}                                                         
a^2+abc+c^2-1&ab^2+bc-a&-ab-c\\ a+bc&b^2-1&-b\\ c&b&-1
\end{smallmatrix}\right],\]
we have
\[\dimvec N=\Phi\cdot\dimvec M=(a^2b^2+2abc+c^2-1,ab^2+bc,ab+c)^t.\]

\noindent{\rm (Step 1) }
Since $M$ is a rigid $kQ'$-module and $Q'$ is a full subquiver of $Q$,
it is a rigid $kQ$-module.
Hence $N$ is also a rigid $kQ$-module since $\tau$ preserves the
rigidity of $kQ$-modules.

Since $M$ is rigid, we have $\Hom_{kQ}(M,N)=\Hom_{kQ}(M,\tau M)=0$.
We have $\Ext^1_{kQ}(M,N)=\Ext^1_{kQ}(M,\tau M)\simeq D\underline{\End}_{kQ}(M)\neq0$ by Auslander-Reiten duality.

It remains to show that $\Hom_{kQ}(N,M)=0$ and $\Ext^1_{kQ}(N,M)\neq0$.

\noindent{\rm (Step 2) }
To prove $\Hom_{kQ}(N,M)=0$, it is enough to show $\Hom_{kQ}(M,\tau^{-1}M)=0$.
Since $M$ does not have $S_3$ as a composition factor, it is enough to show that $\soc\tau^{-1}M$ is a direct sum of copies of $S_3$.
Since $S_1$ is injective, it does not appear in $\soc\tau^{-1}M$ by the indecomposability of $\tau^{-1}M$. 

Assume that $S_2$ appears in $\soc\tau^{-1}M$.
Then we have an exact sequence $0\to S_2\to \tau^{-1}M\to L\to0$.
Applying $\Hom_{kQ}(-,S_3)$, we have an exact sequence
\[\Ext^1_{kQ}(\tau^{-1}M,S_3)\to\Ext^1_{kQ}(S_2,S_3)\to\Ext^2_{kQ}(L,S_3)=0.\]
Since $\Ext^1_{kQ}(S_2,S_3)\neq0$, we have $\Ext^1_{kQ}(\tau^{-1}M,S_3)\neq0$.
On the other hand, we have by Auslander-Reiten duality,
\[\Ext^1_{kQ}(\tau^{-1}M,S_3)\simeq D\Hom_{kQ}(S_3,M)=0,\]
a contradiction.

\noindent{\rm (Step 3) }
To prove $\Ext^1_{kQ}(N,M)\neq0$, we calculate the Euler form, see \cite{ASS}.
We have
\begin{eqnarray*}
\langle N,M\rangle=(\dimvec N)^t\cdot(C^{-1})^t\cdot\dimvec M&=&
(a^2b^2+2abc+c^2-1,ab^2+bc,ab+c)\left[\begin{smallmatrix}                               
1&-a&-c\\ 0&1&-b\\ 0&0&1
\end{smallmatrix}\right]
\left[\begin{smallmatrix}1\\ a\\ 0\end{smallmatrix}\right]\\
&=&-1-a^2(a^2b^2-2b^2-1)-abc(2a^2-3)-c^2(a^2-1),
\end{eqnarray*}
which is easily shown to be negative by our assumption $a\ge2$, $b\ge1$ and $c\ge0$.
\end{proof}

Now we show the following main result in this section.

\begin{proposition} \label{wild3}
Let $Q$ be a connected acyclic wild quiver with 3 vertices. Then:
\begin{itemize}
\item[(a)] There exist $M$ and $N$ satisfying the conditions in Proposition \ref{R}.
\item[(b)] $\ftors(kQ)$ is neither a join-semilattice nor a meet-semilattice.
\end{itemize}
\end{proposition}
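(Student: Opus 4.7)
Part (b) will be an immediate consequence of part (a) combined with Proposition \ref{wild}(b), so the bulk of the work is in establishing (a).

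For (a), my plan is a case analysis on the quiver shape. Up to taking opposites and relabeling vertices, I may order the three vertices as $1 < 2 < 3$ so that all arrows go forward, and record the quiver by the arrow multiplicities $a$, $b$, $c$ of $1 \to 2$, $2 \to 3$, $1 \to 3$. Connectedness and wildness (the latter equivalent to the Tits form $q(\underline{d}) = \sum_i d_i^2 - \sum_{\text{arrows}} d_s d_t$ being indefinite) give a concrete short list of shapes. In the generic cases where $a \geq 2$ and $b \geq 1$, Lemma \ref{1/6} applies directly to $Q$. In the symmetric generic case $b \geq 2$ and $a \geq 1$, Lemma \ref{1/6} applies to $Q^{\op}$ after the relabeling $i \mapsto 4 - i$, and I pull the resulting pair back to $kQ$ via the duality $\mod kQ^{\op} \simeq \mod kQ$, $X \mapsto DX$, which preserves all the conditions of Proposition \ref{R}.

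The residual shapes are the ``star'' quivers (where $b = 0$, so vertex $2$ is a sink rather than a pass-through) and the thin-triangle quivers $(a, b, c) = (1, 1, c)$ with $c \geq 2$. For these I will run a variant of the construction in Lemma \ref{1/6}, using a different Kronecker-type subquiver $Q' \subset Q$ on two of the three vertices, always chosen to contain at least two parallel arrows. For example, in the triangle $(1, 1, c)$ with $c \geq 2$ I take $Q' = (1 \to 3)$ with $c$ arrows and set $M = P_1^{Q'}$ (viewed as a $kQ$-module), $N = \tau_{kQ} M$. The Euler form computation parallel to Step 3 of Lemma \ref{1/6} then gives $\langle N, M \rangle < 0$, and the vanishing of both $\Hom$ spaces follows by inspecting the support of $M$, which is disjoint from the third vertex.

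The main obstacle will be the most degenerate star case $(a, 0, 1)$ with $a \geq 2$: the only Kronecker-type subquiver available is $Q' = (1 \to 2)$ with $a$ arrows, and the naive choice $M = P_1^{Q'}$ produces $\tau_{kQ} M = S_3$ with $\langle N, M \rangle = 0$, so the construction collapses. My plan is to take a deeper preprojective of the subquiver as $M$, namely $M = \tau_{Q'}^{-1}(P_w^{Q'})$ for an appropriate vertex $w$ of $Q'$, viewed as a $kQ$-module; then $N = \tau_{kQ} M$ turns out to be a regular rigid $kQ$-module, and the Euler form $\langle N, M \rangle$ is strictly negative, which yields $\Ext^1(N, M) \neq 0$ once I verify $\Hom(N, M) = 0$. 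The last $\Hom$-vanishing I plan to check directly from the representation-theoretic structure, using that the parallel arrows of $Q'$ have trivial simultaneous kernel on $M_1 \cong k$, which forces any morphism $N \to M$ to vanish on the ``Kronecker part'' and hence to be zero.
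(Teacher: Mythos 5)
Your overall architecture agrees with the paper: (b) is deduced from (a) via Proposition \ref{wild}, and (a) is a case analysis on the three arrow multiplicities, with Lemma \ref{1/6} handling the shapes $(a,b,c)$, $a\ge2$, $b\ge1$, directly and the $k$-dual handling the mirror shapes. Where you diverge is in the ``residual'' shapes (the stars with $m_{23}=0$ and the thin triangles $(1,1,c)$, $c\ge2$): the paper does \emph{not} treat these by new constructions. It observes that every connected acyclic wild $3$-vertex quiver is, up to permuting the roles of $a,b,c$ among the three positions, one of six patterns, and that the patterns other than that of Lemma \ref{1/6} are obtained from it by BGP reflection functors at the source $1$ or the sink $3$, followed possibly by $k$-duality; since reflection functors preserve all the conditions of Proposition \ref{R} for regular modules, the single computation of Lemma \ref{1/6} covers everything. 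In particular your star $(a,0,1)$ and triangle $(1,1,c)$ are not genuinely exceptional --- they are reflections/duals of quivers with the multiple arrow in position $1\to 2$ --- and adopting this reduction would delete the hardest part of your plan.

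As written, the residual cases are where your proposal has real gaps. For the thin triangle, the assertion that ``the vanishing of both $\Hom$ spaces follows by inspecting the support of $M$'' is not yet an argument: $\Hom(N,M)=0$ requires knowing something about $N=\tau_{kQ}M$, not only about $M$. (It does work out --- one computes from the presentation $0\to P_2\to P_1\to M\to 0$ that $\tau_{kQ}M\cong S_2$, and then both $\Hom$ spaces vanish because $M_2=0$ --- but you must carry out that identification.) For the degenerate star $(a,0,1)$ the sketch rests on a false premise: your $M=\tau_{Q'}^{-1}(P_w^{Q'})$ has $\dim M_1=a$ (for $w=2$) or $a^2-1$ (for $w=1$), never $k$, so the proposed ``trivial simultaneous kernel on $M_1\cong k$'' argument for $\Hom(N,M)=0$ does not apply. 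The Euler-form negativity you can indeed verify (e.g.\ for $w=2$ one gets $\dimvec M=(a,a^2-1,0)$, $\dimvec N=(a,1,a)$ and $\langle N,M\rangle=-a^4+3a^2-1<0$), but $\Hom(\tau M,M)=0$ is exactly the delicate point --- compare Step 2 of Lemma \ref{1/6}, which needs a socle analysis of $\tau^{-1}M$ rather than a support inspection --- and it is left unproved in your plan. Either supply these arguments in full, or replace the residual cases by the reflection-functor reduction.
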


\begin{proof}
(a) Let $a,b,c$ be integers such that $a\ge2$, $b\ge1$ and $c\ge0$.
Then $Q$ has one of the following forms:
\begin{eqnarray*}
{\rm (i)}: \xymatrix{1\ar@<.5mm>[r]^{(a)}\ar@/_1pc/[rr]_{(c)}&2\ar@<.5mm>[r]^{(b)}&3},\ \ \ 
{\rm (ii)}: \xymatrix{1\ar@<.5mm>[r]^{(b)}\ar@/_1pc/[rr]_{(a)}&2\ar@<.5mm>[r]^{(c)}&3},\ \ \ 
{\rm (iii)}: \xymatrix{1\ar@<.5mm>[r]^{(c)}\ar@/_1pc/[rr]_{(b)}&2\ar@<.5mm>[r]^{(a)}&3},\ \ \ \\
{\rm (iv)}: \xymatrix{1\ar@<.5mm>[r]^{(b)}\ar@/_1pc/[rr]_{(c)}&2\ar@<.5mm>[r]^{(a)}&3},\ \ \ 
{\rm (v)}: \xymatrix{1\ar@<.5mm>[r]^{(c)}\ar@/_1pc/[rr]_{(a)}&2\ar@<.5mm>[r]^{(b)}&3},\ \ \ 
{\rm (vi)}: \xymatrix{1\ar@<.5mm>[r]^{(a)}\ar@/_1pc/[rr]_{(b)}&2\ar@<.5mm>[r]^{(c)}&3}.
\end{eqnarray*}
First, the case (i) was shown in Lemma \ref{1/6}.
Next, the case (ii) (respectively, (iii)) follows from the case (i) by using the reflection functor at the vertex 1 (respectively, 3).
Finally the case (iv) (respectively, (v), (vi)) follows from the case (i) (respectively, (ii), (iii)) by using the $k$-dual.

(b) This follows from (a) and Proposition \ref{wild}.
\end{proof}

\begin{remark}
When $a,b,c\ge1$, it is easy to check that the modules $M=S_2$ and $N:=\xymatrix{k\ar@<.5mm>[r]^{(a)}\ar@/_2pc/[rr]^{(c)}_{f=(1,\ldots,1)}&0\ar@<.5mm>[r]^{(b)}&k^c}$ also satisfy the conditions in Proposition \ref{R} with $p>0$ and $q>0$.
\end{remark}

\subsection{Proof of Theorem \ref{main}}

We need the following preparation, which is an analog of a well-known result, see \cite[Lemma VII.2.1]{ASS}.

\begin{proposition}\label{four cases}
Let $Q$ be a finite connected quiver. Then one of the following holds.
\begin{itemize}
\item[(a)] $Q$ is a Dynkin quiver.
\item[(b)] $Q$ has at most two vertices.
\item[(c)] $Q$ has an extended Dynkin full subquiver with at least 3 vertices.
\item[(d)] $Q$ has a connected wild full subquiver with exactly 3 vertices.
\end{itemize}
\end{proposition}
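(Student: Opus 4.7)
The plan is a case analysis on the underlying graph $\bar Q$ of $Q$. Assuming that (a) and (b) both fail---so $|Q_0|\ge 3$ and $Q$ is not Dynkin---I aim to produce either an extended Dynkin full subquiver with at least three vertices, yielding (c), or a connected wild full subquiver with exactly three vertices, yielding (d). Throughout I will use the standard trichotomy for connected ordinary $3$-vertex quivers: the only Dynkin such quiver is $A_3$, the only extended Dynkin one is $\tilde A_2$, and everything else is wild.

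First I would handle the multi-edge case. If some pair $u,v$ of vertices of $\bar Q$ is joined by at least two arrows, then by connectedness of $Q$ and $|Q_0|\ge 3$ I can choose a third vertex $w$ adjacent in $\bar Q$ to $u$ or $v$; the full subquiver on $\{u,v,w\}$ is connected, contains a multi-edge, and hence is neither $A_3$ nor $\tilde A_2$, so by the trichotomy it is wild, establishing (d).

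Otherwise $\bar Q$ has only simple edges, and I would split according to whether $\bar Q$ contains a cycle. If it does, take a shortest cycle $C$; by minimality it is chordless, so the full subquiver on $V(C)$ is $C$ itself, of type $\tilde A_{k-1}$ with $k\ge 3$ vertices, establishing (c). If not, $\bar Q$ is a tree with at least $4$ vertices (since a simply-laced tree on $3$ vertices is $A_3$, Dynkin). I would use the key observation that, because $\bar Q$ is acyclic, any subtree coincides with the full induced subgraph on its vertex set, so every subtree exhibited below is automatically a full subquiver.

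Finally I would enumerate the tree case by branch structure. A vertex of degree $\ge 4$ yields $\tilde D_4$ (take it and four of its neighbors). Two or more vertices of degree $3$: choose two such $w_1,w_2$ with no further branch vertex strictly between them, and take $w_1,w_2$, two non-path neighbors of each, and the interior of the $w_1$--$w_2$ path, obtaining $\tilde D_n$ for some $n\ge 5$. A unique vertex of degree $3$ makes $\bar Q$ a tripod $T_{a,b,c}$ with $1\le a\le b\le c$; since the Dynkin tripods are exactly $(1,1,c)$ for any $c$ and $(1,2,c)$ for $c\le 4$, any other $(a,b,c)$ must dominate one of $(2,2,2)$, $(1,3,3)$ or $(1,2,5)$, and truncating each branch yields $\tilde E_6$, $\tilde E_7$ or $\tilde E_8$ as a full subquiver. (No branch vertex at all would make $\bar Q=A_n$, Dynkin, excluded.) The main obstacle will be this tree case, specifically the combinatorial enumeration of non-Dynkin tripods and the domination step; the multi-edge and cycle cases are short once the 3-vertex trichotomy is invoked.
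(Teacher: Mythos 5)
Your proof is correct and follows essentially the same route as the paper: split off the multiple-arrow case by exhibiting a connected three-vertex full subquiver containing the double arrow (which is automatically wild), and in the simply-laced case find an extended Dynkin full subquiver. The only difference is that where the paper simply cites \cite[Lemma VII.2.1]{ASS} for the simply-laced case, you carry out the standard cycle/branch-vertex/tripod enumeration yourself, which is exactly the content of that cited lemma.
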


\begin{proof}
First, assume that $Q$ has multiple arrows from $i$ to $j$.
If $Q$ has exactly two vertices, then we have the case (b).
If $Q$ has at least 3 vertices, then any connected full subquiver of $Q$ consisting of $i$, $j$ and one more vertex is wild. Thus we have the case (d).

Next, assume that $Q$ has no multiple arrows. Then it follows from \cite[Lemma VII.2.1]{ASS} that we have either the case (a) or (c).
\end{proof}

Now we are ready to prove Theorem \ref{main}.

(d)$\Rightarrow$(a) If $Q$ is a Dynkin quiver, then $\ftors(kQ)$ forms a lattice by Proposition \ref{Dynkin}.
If $Q$ has exactly two vertices, then $\ftors(kQ)$ forms a lattice by Proposition \ref{2vertices}.

(a)$\Rightarrow$(b) This is clear.

(b)$\Rightarrow$(d) Assume that $Q$ does not satisfy the condition (d).
Then by Proposition \ref{four cases}, $Q$ has either an extended Dynkin full subquiver with at least 3 vertices, or a connected wild full subquiver with exactly 3 vertices,
For the former case (respectively, latter case), $\ftors (kQ)$ is not a join-semilattice by Propositions \ref{tame} (respectively, \ref{wild3}) and \ref{interval}(b).

(c)$\Leftrightarrow$(d) By Proposition \ref{opposite}, the condition (c) is equivalent to that $\ftors(kQ^{\rm op})$ forms a join-semilattice.
This is equivalent to that $Q^{\rm op}$ is either a Dynkin quiver or has at most two vertices, by using the equivalence (b)$\Rightarrow$(d) for the quiver $Q^{\rm op}$.
This is clearly equivalent to the condition (d).
\qed

\subsection{Concealed canonical algebras and tubular algebras}

Inspired by the proof that $\ftors\Lambda$ is not a join-semilattice for path
algebras of extended Dynkin quivers with at least 3 vertices, we have the following.

\begin{proposition}\label{key for canonical}
Let $\Lambda$ be a finite dimensional $k$-algebra such that the set of indecomposable
$\Lambda$-modules is a disjoint union $\PP\cup\RR\cup\QQ$, 
where $\RR$ is a family of stable standard orthogonal tubes,
$\Hom_\Lambda(\RR,\PP)=0$, $\Hom_\Lambda(\QQ,\RR)=0$ and $\Hom_\Lambda(\QQ,\PP)=0$.
If there is a tube $\CC$ in $\RR$ of rank $r\ge2$, then $\ftors\Lambda$
is neither a join-semilattice nor a meet-semilattice.
\end{proposition}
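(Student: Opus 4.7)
The plan is to adapt the argument of Proposition \ref{tame} (extended Dynkin case), but to prove the non-functorial-finiteness of the resulting torsion class via the Loewy-length technique from the proof of Proposition \ref{wild}, rather than by computing ${}^\perp(\bigcap_i\TT_i^\perp)$ explicitly as in the tame case. The meet-semilattice assertion will then follow from the join-semilattice assertion via Proposition \ref{opposite}(b).

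Let $S_1,\ldots,S_r$ be the quasi-simples of the rank-$r$ tube $\CC\subseteq\RR$. Since $\tau S_i=S_{i-1}$ (indices mod $r$) and $r\ge 2$, the standard orthogonal structure of $\CC$ forces $\Hom_\Lambda(S_i,\tau S_i)=\Hom_\Lambda(S_i,S_{i-1})=0$, so each $S_i$ is $\tau$-rigid and $\TT_i:=\Fac S_i\in\ftors\Lambda$. In the complete lattice $\tors\Lambda$, the join $\bigvee_{i=1}^r\TT_i$ equals the smallest torsion class containing $\{S_1,\ldots,S_r\}$. Because $\CC$ is extension-closed and standard, $\Filt(S_1,\ldots,S_r)=\CC$, and Lemma \ref{quotient} then identifies this smallest torsion class with $\Fac\CC$. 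By Theorem \ref{existence of meet}(b) (cf.\ Remark \ref{tors/ftors}), it thus suffices to show that $\Fac\CC$ is \emph{not} functorially finite.

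Suppose, for contradiction, that $\Fac\CC=\Fac X$ for some $X\in\mod\Lambda$. Since $X\in\Fac\CC$ there is a surjection $C\twoheadrightarrow X$ with $C\in\add\CC$, and since $C\in\add\CC\subseteq\Fac\CC=\Fac X$ there is also a surjection $X^n\twoheadrightarrow C$; combining gives $\Fac X=\Fac C=\Fac\CC$, so one may replace $X$ by $C\in\add\CC$. Let $L$ be the maximum quasi-length (length in the abelian category $\CC$) of an indecomposable summand of $C$, and pick a uniserial object $U\in\CC$ of quasi-length $L+1$, which exists in any tube of rank $r$. Then $U\in\Fac\CC=\Fac C$ forces a surjection $C^m\twoheadrightarrow U$ in $\mod\Lambda$; because $\CC$ is a full exact abelian subcategory of $\mod\Lambda$, this is also a surjection in $\CC$. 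But the Loewy length of $C^m$ in $\CC$ equals $L$, while that of $U$ is $L+1$, and Loewy length cannot strictly increase under a quotient. This contradiction shows that $\Fac\CC$ is not functorially finite, and hence $\ftors\Lambda$ is not a join-semilattice.

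For the meet-semilattice statement, note that $\Lambda^{\op}$ satisfies the same hypotheses: the $k$-duality $D$ turns the decomposition $\PP\cup\RR\cup\QQ$ of the indecomposables of $\mod\Lambda$ into $D\QQ\cup D\RR\cup D\PP$ in $\mod\Lambda^{\op}$, carries stable standard orthogonal tubes to stable standard orthogonal tubes, and transports the three required Hom-vanishings to each other; in particular, $D\CC$ is a tube of rank $r\ge 2$ in $\mod\Lambda^{\op}$. Applying the previous argument to $\Lambda^{\op}$, $\ftors(\Lambda^{\op})$ is not a join-semilattice, and Proposition \ref{opposite}(b) then yields that $\ftors\Lambda$ is not a meet-semilattice. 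The main technical obstacle I anticipate is the descent-to-$\CC$ step in the third paragraph: both the fact that a surjection $C^m\twoheadrightarrow U$ in $\mod\Lambda$ with endpoints in $\CC$ is already a surjection in $\CC$, and the monotonicity of Loewy length in $\CC$ under such surjections, rely on using the \emph{stable}, \emph{standard}, and \emph{orthogonal} hypotheses on $\RR$ in a precise way to view $\CC$ as a well-behaved (in fact, serial) abelian subcategory of $\mod\Lambda$.
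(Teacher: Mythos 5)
Your overall strategy (quasi-simples of the tube, Loewy-length contradiction in $\add\CC$, duality for the meet statement) is the same as the paper's, but there is a genuine gap in the step where you identify the join $\bigvee_{i=1}^r\Fac S_i$ in $\tors\Lambda$ with $\Fac\CC$. You justify the extension-closedness of $\Fac\CC$ by Lemma \ref{quotient}, but that lemma is stated and proved only for path algebras: its proof uses $\Ext^2_{kQ}(C_1,\Ker f)=0$, i.e.\ heredity. The algebra $\Lambda$ in Proposition \ref{key for canonical} is a general finite dimensional algebra (in the intended applications a concealed canonical or tubular algebra, which is not hereditary), so you cannot invoke that lemma, and without it the smallest torsion class containing $\CC$ is only known to be $\Filt(\Fac\CC)$, possibly strictly larger than $\Fac\CC$. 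This breaks your subsequent reduction: a generator $X$ of the join need not lie in $\Fac\CC$, so you cannot replace it by an object $C\in\add\CC$.

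The paper avoids this entirely. It never identifies $\TT:=\bigvee_i\Fac S_i$ with $\Fac\CC$; instead it observes that $\add(\CC,\QQ)$ is a torsion class by the hypotheses $\Hom_\Lambda(\RR,\PP)=0$, $\Hom_\Lambda(\QQ,\RR)=0$, $\Hom_\Lambda(\QQ,\PP)=0$, whence $\TT\subseteq\add(\CC,\QQ)$. If $\TT=\Fac M$, one writes $M=N\oplus N'$ with $N\in\add\CC$ and $N'\in\add\QQ$; since $\Hom_\Lambda(\QQ,\RR)=0$, the summand $N'$ contributes nothing to maps onto objects of $\CC$, so $\CC\subseteq\Fac N$ with $N\in\add\CC$, and then the unbounded-Loewy-length argument (which you carry out correctly) gives the contradiction. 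Your proof can be repaired by substituting this ``strip off the $\QQ$-part'' step for the appeal to Lemma \ref{quotient}; the rest of your argument, including the duality step for the meet-semilattice assertion, is sound.
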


\begin{proof}
We only prove the assertion for join-semilattices since the other assertion follows
by Proposition \ref{opposite}.

Let $S_1,\ldots,S_r$ be the indecomposable modules at the border of $\CC$.
Since $\CC$ is standard, then $S_1,\ldots,S_r$ are
$\tau$-rigid, and hence $\Fac S_i$ is in $\ftors\Lambda$ for $i=1,\ldots,n$.
Let $\TT:=\bigvee_{i=1}^n\Fac S_i$ in $\tors\Lambda$. Then $\TT$ is the smallest
torsion class in $\mod\Lambda$ containing $\CC$.
Since $\add(\CC,\QQ)$ is a torsion class by our assumptions, we have $\TT
\subset\add(\CC,\QQ)$.
Now if $\TT$ is functorially finite, then there exists $M\in\TT$ such that
$\TT=\Fac M$.
Since $\Hom_\Lambda(\QQ,\CC)=0$ holds by our assumption, the maximal direct summand
$N$ of $M$ contained in $\add\CC$ satisfies $\CC\subset\Fac N$.
But this is impossible since $\add\CC$ is equivalent to the category of finite
dimensional modules over the complete path algebra $k\widehat{Q}$ of the quiver
$Q$ of type $\widetilde{A}_{r-1}$ by our assumption,
and hence there is no upper bound of Loewy length of objects.
\end{proof}

Now we are ready to prove Theorem \ref{canonical algebra}.
It follows from Proposition \ref{key for canonical} and by the properties of the
concealed canonical (respectively, tubular algebras) listed in
\cite[page 380]{SS} (respectively, \cite[Theorem XIX.3.20]{SS})
since there exists a tube $\CC$ of rank $r\ge2$ if and only if
$\Lambda$ has at least 3 vertices.
\qed

\end{document}